\documentclass[a4paper,10pt]{amsart}

\usepackage[left=1.33in,right=1.33in,top=1.1in,bottom=1.1in,includehead,includefoot]{geometry}
\usepackage[T1]{fontenc}
\usepackage{lmodern,microtype}
\usepackage{amsthm,amsmath,amssymb,colonequals}
\usepackage{graphicx}
\usepackage{enumitem}
\usepackage[pdftitle={Ramsey numbers and monotone colorings},
            pdfauthor={M. Balko}]{hyperref}
\hypersetup{
unicode=true,
}

\newtheorem{theorem}{Theorem}
\newtheorem{lemma}[theorem]{Lemma}
\newtheorem{corollary}[theorem]{Corollary}

\newtheorem{problem}[theorem]{Problem}

\theoremstyle{remark}

\newtheorem*{claim*}{Claim}

\linespread{1.1}

\makeatletter
\let\old@setaddresses\@setaddresses
\def\@setaddresses{\bgroup\parindent 0pt\let\scshape\relax\old@setaddresses\egroup}
\makeatother

\DeclareMathOperator{\OR}{\overline{R}} 
\DeclareMathOperator{\ORS}{\overline{R}_{mon}} 
\DeclareMathOperator{\OT}{OT} 
\DeclareMathOperator{\tow}{tow} 
\DeclareMathOperator{\ES}{ES} 
\DeclareMathOperator{\R}{R} 

\title{Ramsey numbers and monotone colorings}

\author{Martin Balko}

\address[Martin Balko]{Department of Applied Mathematics and Institute for Theoretical Computer Science, Faculty of Mathematics and Physics, Charles University, Prague, Czech Republic}
\email{balko@kam.mff.cuni.cz}

\address[Martin Balko]{Department of Computer Science, Faculty of Natural Sciences, Ben-Gurion University of the Negev, Beer~Sheva, Israel}

\thanks{The project leading to this application has received funding from European Research Council (ERC)
under the European Unions Horizon 2020 research and innovation programme under grant agreement No.
678765.
The author was also supported by the grant 1452/15 from Israel Science Foundation and by the grant no.~18-13685Y of the Czech Science Foundation (GA\v{C}R).
The author also acknowledges the support of the Center for Foundations of Modern Computer Science (Charles University project UNCE/SCI/004).
}

\begin{document}

\begin{abstract}
For positive integers $N$ and $r \geq 2$, an \emph{$r$-monotone coloring} of $\binom{\{1,\dots,N\}}{r}$ is a 2-coloring by $-1$ and $+1$ that is monotone on the lexicographically ordered sequence of $r$-tuples of every $(r+1)$-tuple from~$\binom{\{1,\dots,N\}}{r+1}$.
Let $\ORS(n;r)$ be the minimum $N$ such that every $r$-monotone coloring of $\binom{\{1,\dots,N\}}{r}$ contains a monochromatic copy of $\binom{\{1,\dots,n\}}{r}$.

For every $r \geq 3$, it is known that $\ORS(n;r) \leq \tow_{r-1}(O(n))$, where $\tow_h(x)$ is the \emph{tower function} of height $h-1$ defined as $\tow_1(x)=x$ and $\tow_h(x) = 2^{\tow_{h-1}(x)}$ for $h \geq 2$.
The Erd\H{o}s--Szekeres Lemma and the Erd\H{o}s--Szekeres Theorem imply $\ORS(n;2)=(n-1)^2+1$ and $\ORS(n;3)=\binom{2n-4}{n-2}+1$, respectively.
It follows from a result of Eli\'{a}\v{s} and Matou\v{s}ek that  $\ORS(n;4)\geq\tow_3(\Omega(n))$.

We show that $\ORS(n;r)\geq \tow_{r-1}(\Omega(n))$ for every $r \geq 3$.
This, in particular, solves an open problem posed by Eli\'{a}\v{s} and Matou\v{s}ek and by Moshkovitz and Shapira. 
Using two geometric interpretations of monotone colorings, we show connections between estimating $\ORS(n;r)$ and two Ramsey-type problems that have been recently considered by several researchers.
Namely, we show connections with higher-order Erd\H{o}s--Szekeres theorems and with Ramsey-type problems  for order-type homogeneous sequences of points.

We also prove that the number of $r$-monotone colorings of $\binom{\{1,\dots,N\}}{r}$ is $2^{N^{r-1}/r^{\Theta(r)}}$ for $N \geq r \geq 3$, which generalizes the well-known fact that the number of simple arrangements of~$N$ pseudolines is $2^{\Theta(N^2)}$.
\end{abstract}

\maketitle

\section{Introduction}
\label{sec:introduction}

Let $r \ge 2$ be an integer.
An \emph{ordered $r$-uniform hypergraph} is a pair $\mathcal{H} = (H, \prec)$ consisting of an $r$-uniform hypergraph $H$ and a total ordering $\prec$ of the vertices of $H$.
Let $\mathcal{H}_1=(H_1,\prec_1)$ and $\mathcal{H}_2=(H_2,\prec_2)$ be two ordered $r$-uniform hypergraphs.
We say that $\mathcal{H}_1$ and $\mathcal{H}_2$ are \emph{isomorphic} if there is an isomorphism between  $H_1$ and $H_2$ that preserves the orders $\prec_1$ and~$\prec_2$.
The ordered hypergraph $\mathcal{H}_1$ is an \emph{ordered sub-hypergraph} of~$\mathcal{H}_2$ if $H_1$ is a sub-hypergraph of$~H_2$ and $\prec_1$ is a suborder of $\prec_2$.

For a positive integer $n$, we let $\mathcal{K}^r_n$ be the ordered complete $r$-uniform hypergraph on $n$ vertices.
That is, the edge set of $\mathcal{K}^r_n$ consists of all $r$-element subsets of the vertex set.
We also use $\mathcal{P}^r_n$ to denote the \emph{monotone $r$-uniform path} on $n$ vertices.
That is, $\mathcal{P}^r_n=(P_n^r,\prec)$ is an ordered $r$-uniform $n$-vertex hypergraph with edges formed by $r$-tuples of consecutive vertices in~$\prec$.

A \emph{coloring} $c$ of an ordered $r$-uniform hypergraph $\mathcal{H}$ is a function that assigns some element from a finite set $C$ of \emph{colors} to each edge of $\mathcal{H}$.
We say that $\mathcal{H}$ is \emph{monochromatic} in $c$ if all edges of~$\mathcal{H}$ receive the same color via $c$.
If $|C|=k$, then we call $c$ a \emph{$k$-coloring} of $\mathcal{H}$.

The \emph{ordered Ramsey number} $\OR(\mathcal{H})$ of an ordered $r$-uniform hypergraph $\mathcal{H}$ is the minimum positive integer $N$ such that for every 2-coloring $c$ of $\mathcal{K}^r_N$ there is a sub-hypergraph of $\mathcal{K}^r_N$ that is monochromatic in $c$ and isomorphic to $\mathcal{H}$.
It follows from Ramsey's theorem that ordered Ramsey numbers always exist and are finite.
There are examples of ordered graphs $\mathcal{G}=(G,\prec)$, for which ordered Ramsey numbers $\OR(\mathcal{G})$ differ significantly from the standard Ramsey numbers $\R(G)$.
For example, there are ordered matchings $\mathcal{M}=(M,\prec)$ on $n$ vertices for which $\R(M)$ is only linear in $n$, while $\OR(\mathcal{M})$ grows superpolynomially in~$n$~\cite{bckk15,cfls17}.

The motivation for studying the growth rate of the ordered Ramsey numbers $\OR(\mathcal{P}^r_n)$ of monotone $r$-uniform paths comes from the classical paper by Erd\H{o}s and Szekeres~\cite{erdSze35}.
In this paper, which was one of the starting points of both Ramsey theory and discrete geometry, Erd\H{o}s and Szekeres independently reproved Ramsey's Theorem and also proved two other important results in Ramsey theory, the \emph{Erd\H{o}s--Szekeres Theorem} about point sets in convex position and the \emph{Erd\H{o}s--Szekeres Lemma} on monotone subsequences.
The latter results states that for every $n \in \mathbb{N}$ there is a positive integer $N(n) = (n-1)^2+1$ such that every sequence of $N(n)$ numbers contains a nondecreasing or a nonincreasing subsequence of length $n$.
Moreover, the number $N(n)$ is minimum possible, as there are sequences of $(n-1)^2$ numbers without a monotone subsequence of length $n$.
It is easy to show that $N(n) \leq \OR(\mathcal{P}^2_n)$.
In fact, $N(n)=\OR(\mathcal{P}^2_n)=(n-1)^2+1$~\cite{msw15}.
The Erd\H{o}s--Szekeres Theorem states that for every $n \in \mathbb{N}$ there is a positive integer ${\rm ES}(n)$ such that every set of ${\rm ES}(n)$ points in the plane with no three collinear points contains $n$ points that are vertices of a convex $n$-gon.
This result is closely connected to the problem of estimating $\OR(\mathcal{P}^3_n)$.
Erd\H{o}s and Szekeres~\cite{erdSze35} showed ${\rm ES}(n) \leq \binom{2n-4}{n-2}+1$.
We can again rather easily show that ${\rm ES}(n) \leq \OR(\mathcal{P}^3_n)$.
The bound of Erd\H{o}s and Szekeres then follows from the fact $\OR(\mathcal{P}^3_n) = \binom{2n-4}{n-2}+1$ for every $n \geq 2$~\cite{fpss12,moshSha14}.
Moreover, several other interesting geometric applications of estimates on $\OR(\mathcal{P}^r_n)$ for $r \geq 3$ appeared, for example, variants of the Erd\H{o}s--Szekeres Theorem for convex bodies~\cite{fpss12} or the higher-order Erd\H{o}s--Szekeres theorems~\cite{eliMat13}.

Given this motivation, the ordered Ramsey numbers $\OR(\mathcal{P}^r_n)$ have been recently quite intensively studied~\cite{eliMat13,fpss12,msw15,moshSha14} and their growth rate is nowadays well understood.
For positive integers $n$ and $h$, let $\tow_h(n)$ be the \emph{tower function} of height $h-1$.
That is, $\tow_1(n) = n$ and $\tow_h(n) = 2^{\tow_{h-1}(n)}$ for every $h \geq 2$.
Moshkovitz and Shapira~\cite{moshSha14} showed that, for all positive integers $n$ and $r$ with $r \geq 3$, \begin{equation} 
\label{eq-RamseyNumberMonotonePath}
\OR(\mathcal{P}^r_{n+r-1}) = \tow_{r-1}((2-o(1))n).
\end{equation}

In fact, Moshkovitz and Shapira~\cite{moshSha14} proved $\OR(\mathcal{P}^r_{n+r-1})=\rho_r(n)+1$, where $\rho_r(n)$ is the number of \emph{line partitions of $n$ of order $r$} (see~\cite{moshSha14} for definitions).
For $r=3$, this gives the exact formula $\OR(\mathcal{P}^3_n) = \binom{2n-4}{n-2}+1$ and yields a new proof of the Erd\H{o}s--Szekeres Theorem~\cite{erdSze35}. 
Their coloring $c$ of~$\mathcal{K}^3_N=(K^3_N,\prec)$ that gives $\OR(\mathcal{P}^3_n) > \binom{2n-4}{n-2}$ satisfies the following \emph{transitivity property}: if $v_1 \prec v_2 \prec v_3 \prec v_4$ are vertices of $\mathcal{K}^3_N$ such that $c(\{v_1,v_2,v_3\}) = c(\{v_2,v_3,v_4\})$, then all triples from $\binom{\{v_1,v_2,v_3,v_4\}}{3}$ have the same color in $c$.

More generally, for an integer $r \geq 2$, a 2-coloring $c$ of $\mathcal{K}^r_N = (K^r_N,\prec)$ is called \emph{transitive} if for every $(r+1)$-tuple of vertices $\{v_1,\dots,v_{r+1}\}$ that satisfies $v_1 \prec \dots \prec v_{r+1}$ and $c(\{v_1,\dots,v_r\})=c(\{v_2,\dots,v_{r+1}\})$ it holds that all $r$-tuples from $\binom{\{v_1,\dots,v_{r+1}\}}{r}$ have the same color in $c$.
For an ordered hypergraph $\mathcal{H}$, let $\OR_{trans}(\mathcal{H})$ be the number $\OR(\mathcal{H})$ restricted to transitive 2-colorings.
That is, $\OR_{trans}(\mathcal{H})$ is the minimum positive integer $N$ such that for every transitive 2-coloring $c$ of $\mathcal{K}^r_N$ there is an ordered sub-hypergraph of~$\mathcal{K}^r_N$ that is monochromatic in $c$ and isomorphic to $\mathcal{H}$.

Note that $\OR_{trans}(\mathcal{P}^r_n) = \OR_{trans}(\mathcal{K}^r_n)$ for all positive integers $n$ and $r \geq 2$.
We also remark that $\OR_{trans}(\mathcal{P}^r_n) < \OR(\mathcal{K}^r_n)$ for every $r \geq 2$ and every sufficiently large $n$.
For example, $\OR_{trans}(\mathcal{P}^2_n) = (n-1)^2+1$~\cite{msw15}, while $\OR(\mathcal{K}^r_n)$ equals the standard Ramsey number $\R(K^r_n)$ of the complete $r$-uniform hypergraph on $n$ vertices and thus $\OR(\mathcal{K}^2_n)$ grows exponentially in $n$~\cite{erdos47}.

Perhaps surprisingly, the colorings of $\mathcal{K}^r_N$, which were found by Moshkovitz and Shapira~\cite{moshSha14} and which give $\OR(\mathcal{P}^r_{n+r-1})>\rho_r(n)$, are not transitive for $r>3$.
Thus it is natural to ask the following question.

\begin{problem}{\cite{eliMat13,moshSha14}}
\label{prob-transitivity}
What is the growth rate of $\OR_{trans}(\mathcal{P}^r_n)$?
\end{problem}

Problem~\ref{prob-transitivity} was considered by Eli\'{a}\v{s} and Matou\v{s}ek~\cite{eliMat13}, who asked for better lower bounds on $\OR_{trans}(\mathcal{P}^r_n)$.
Moshkovitz and Shapira~\cite{moshSha14} note that it might be very well possible that bounds comparable with the bounds for $\OR(\mathcal{P}^r_n)$ hold also for $\OR_{trans}(\mathcal{P}^r_n)$.
They also mention a problem of deciding whether $\OR(\mathcal{P}^r_n) = \OR_{trans}(\mathcal{P}^r_n)$ for all $n$ and $r$.

Clearly, $\OR_{trans}(\mathcal{P}^r_n) \leq \OR(\mathcal{P}^r_n)$ and, by~\eqref{eq-RamseyNumberMonotonePath}, $\OR_{trans}(\mathcal{P}^r_n)$ grows at most as a tower of height $r-2$.
This was also shown by Eli\'{a}\v{s} and Matou\v{s}ek~\cite{eliMat13}, who also proved $\OR_{trans}(\mathcal{P}^4_n) = \tow_3(\Theta(n))$.
Thus Problem~\ref{prob-transitivity} is settled for $r \leq 4$.
We are not aware of any other lower bound on $\OR_{trans}(\mathcal{P}^r_n)$.

In this paper, we settle Problem~\ref{prob-transitivity} by constructing,  for all $n$ and $r$ with $r \geq 3$, transitive colorings $c_r$ of~$\mathcal{K}^r_N$ with no monochromatic copy of $\mathcal{P}^r_{2n+r-1}$, where $N \geq \tow_{r-1}((1-o(1))n)$.
In fact, we show that the colorings $c_r$ satisfy so-called \emph{monotonicity property}, which is much more restrictive than the transitivity property and which admits several geometric interpretations.

\subsection{Monotone colorings}
\label{subsec:signotopes}

For a positive integer $n$, we write $[n]$ to denote the set $\{1,\dots,n\}$.
Let $S$ be a sequence of $n$ elements from some set.
For a subset $\{i_1,\dots,i_k\}$ of $[n]$, we use $S^{(i_1,\dots,i_k)}$ to denote the subsequence of $S$ obtained by deleting all elements from~$S$ that are at position $i_j$ for some $j \in [k]$.

Let $r \geq 2$ be an integer.
A 2-coloring $c$ of $\mathcal{K}^r_N=(K^r_N,\prec)$ is called an \emph{$r$-monotone coloring} of~$\mathcal{K}^r_N$ if it assigns $-1$ or $+1$ to every edge of $\mathcal{K}^r_N$ such that the following \emph{monotonicity property} is satisfied: for every sequence $S$ of $r+1$ vertices of $\mathcal{K}^r_N$ ordered by $\prec$ and all integers $i,j,k$ with $1 \leq i < j < k \leq r+1$,  we have $c(S^{(k)}) \leq c(S^{(j)}) \leq c(S^{(i)})$ or $c(S^{(k)}) \geq c(S^{(j)}) \geq c(S^{(i)})$.
In other words, the monotonicity condition says that there is at most one change of a sign in the sequence $(c(S^{(r+1)}), \dots, c(S^{(1)}))$.
When referring to a 2-coloring that is $r$-monotone for some $r \geq 2$, we sometimes use the term \emph{monotone}.
We also abbreviate $-1$ and $+1$ by $-$ and $+$, respectively.

Note that every $r$-monotone coloring of $\mathcal{K}^r_N$ is a transitive 2-coloring of $\mathcal{K}^r_N$.
For $r=2$, transitive and $2$-monotone colorings coincide.
However, for $r \geq 3$, the monotonicity property is much more restrictive than the transitivity property, as $\mathcal{K}^r_{r+1}$ admits $2^r+2$ transitive and only $2r+2$ $r$-monotone colorings.
An example of a transitive 2-coloring of $\mathcal{K}^3_4$, which is not $3$-monotone, is a function $c$ with $(c(\{1,2,3\}),c(\{1,2,4\}),c(\{1,3,4\}),c(\{2,3,4\}))=(-,+,-,+)$.

The notion of monotone colorings has been considered by several researchers~\cite{felWei01,miya17,zieg93} under different names.
In some sense, monotone colorings can be viewed as more natural than transitive colorings, as they admit various geometric interpretations; see Subsections~\ref{subsec-higherOrderES} and~\ref{subsec-orderType} for examples.

\section{Our results}
\label{sec:ourResults}

A \emph{monotone Ramsey number} $\ORS(\mathcal{H})$ of an ordered $r$-uniform hypergraph $\mathcal{H}$ is the minimum positive integer $N$ such that for every $r$-monotone coloring $c$ of $\mathcal{K}^r_N$ there is an ordered sub-hypergraph of $\mathcal{K}^r_N$ that is monochromatic in~$c$ and isomorphic to $\mathcal{H}$.

Since every monotone coloring is transitive, we get $\ORS(\mathcal{P}^r_n) \leq \OR_{trans}(\mathcal{P}^r_n)$ and also $\ORS(\mathcal{P}^r_n) = \ORS(\mathcal{K}^r_n)$ for all $n$ and $r \geq 2$.
It follows from~\eqref{eq-RamseyNumberMonotonePath} that $\ORS(\mathcal{P}^r_n)\leq \tow_{r-1}(O(n))$.
All known lower bounds on $\OR_{trans}(\mathcal{P}^r_n)$ are also true for $\ORS(\mathcal{P}^r_n)$.
That is, we have $\ORS(\mathcal{P}^2_n) = \OR_{trans}(\mathcal{P}^2_n) = \OR(\mathcal{P}^2_n) = (n-1)^2+1$~\cite{erdSze35}, $\ORS(\mathcal{P}^3_n) = \OR_{trans}(\mathcal{P}^3_n) = \OR(\mathcal{P}^3_n) =\binom{2n-4}{n-2}+1$~\cite{erdSze35}, and $\ORS(\mathcal{P}^4_n) = \tow_3(\Theta(n))$~\cite{eliMat13} for every $n \in \mathbb{N}$, as all the constructed transitive colorings in these results are actually monotone. 

As our first main result, we prove an asymptotically tight lower bound on $\ORS(\mathcal{P}^r_n)$ for $r \geq 3$.
Since $\ORS(\mathcal{P}^r_n) \leq \OR_{trans}(\mathcal{P}^r_n)$, this settles Problem~\ref{prob-transitivity}.

\begin{theorem}
\label{thm-lowerBound}
For positive integers $r$ and $n$ with $r \geq 3$, we have
\[\ORS(\mathcal{P}^r_{2n+r-1}) \geq \tow_{r-1}((1-o(1))n).\]
\end{theorem}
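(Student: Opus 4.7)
I would proceed by induction on $r$. The base case $r=3$ is immediate from the classical Erd\H{o}s--Szekeres coloring, which is already noted in the excerpt to be $3$-monotone and to give $\ORS(\mathcal{P}^3_m) \geq \binom{2m-4}{m-2}$; setting $m = 2n+2$ yields $\ORS(\mathcal{P}^3_{2n+2}) \geq \binom{4n}{2n} \geq \tow_2((1-o(1))n)$.

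For the inductive step, given an $r$-monotone coloring $c_r$ of $\mathcal{K}^r_N$ with no monochromatic $\mathcal{P}^r_{2n+r-1}$, the aim is to construct an $(r+1)$-monotone coloring $c_{r+1}$ of $\mathcal{K}^{r+1}_{2^N}$ with no monochromatic $\mathcal{P}^{r+1}_{2n+r}$. The plan is to adapt the Erd\H{o}s--Hajnal--Rado stepping-up so that it preserves monotonicity (the classical stepping-up is transitive but not monotone for $r \geq 3$). Identify the vertex set with $\{0,1\}^N$ ordered lexicographically, and for $x \prec y$ let $\delta(x,y) := \max\{j : x_j \ne y_j\}$. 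For an $(r+1)$-tuple $x_1 \prec \cdots \prec x_{r+1}$ with deltas $\delta_i := \delta(x_i,x_{i+1})$, define $c_{r+1}(\{x_1,\ldots,x_{r+1}\}) := c_r(\{\delta_1,\ldots,\delta_r\})$ whenever the $\delta_i$ are pairwise distinct, and otherwise assign a canonical value in $\{-1,+1\}$ determined only by the combinatorial shape (positions of repetitions and local maxima) of the delta sequence.

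The main obstacle is verifying the $(r+1)$-monotonicity of $c_{r+1}$: for every $(r+2)$-chain $x_0 \prec \cdots \prec x_{r+1}$, the $r+2$ deletion-colors must exhibit at most one sign change. In the generic case, where the $\delta_i$ are pairwise distinct, this should reduce cleanly to the $r$-monotonicity of $c_r$ on $\{\delta_0,\ldots,\delta_r\}$: deleting an endpoint $x_0$ or $x_{r+1}$ drops $\delta_0$ or $\delta_r$ from the index multiset, while deleting an interior $x_i$ merges $\delta_{i-1}$ and $\delta_i$ into their maximum, so the resulting $r$-subsets of $[N]$ are precisely those appearing in the monotonicity condition for $c_r$. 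The degenerate cases, where several $\delta_i$ coincide, must be absorbed by a carefully engineered canonical rule, and making this rule interlock seamlessly with $c_r$ (using that $c_r$ is monotone, not merely transitive) is what I expect to be the technical heart of the argument.

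Finally, to bound the length of a monochromatic monotone path in $c_{r+1}$, suppose $y_1 \prec \cdots \prec y_M$ is such a path and consider its associated sequence $d_i := \delta(y_i, y_{i+1})$. Decompose it into maximal strictly monotone runs; on each run the generic definition of $c_{r+1}$ forces every $r$-tuple of consecutive $d_j$'s to receive the same $c_r$-color, producing a monochromatic $\mathcal{P}^r$-subpath in $c_r$ whose length is bounded by the inductive hypothesis. Combining a constant number of runs yields $M \leq 2n+r-1$, so $c_{r+1}$ avoids a monochromatic $\mathcal{P}^{r+1}_{2n+r}$; since $2^N = 2^{\tow_{r-1}((1-o(1))n)} = \tow_r((1-o(1))n)$, the induction closes.
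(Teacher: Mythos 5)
Your plan's inductive step has a genuine gap, and it is already in the generic case that you describe as ``clean.'' Consider an $(r+2)$-chain $x_1 \prec \cdots \prec x_{r+2}$ with $\delta_i := \delta(x_i,x_{i+1})$ pairwise distinct, and write $D := \{\delta_1,\dots,\delta_{r+1}\}$. Deleting $x_{r+2}$ removes $\delta_{r+1}$ from $D$; deleting $x_1$ removes $\delta_1$; and deleting an interior $x_j$ removes $\min(\delta_{j-1},\delta_j)$. The $r$-monotonicity of $c_r$ controls the colors $c_r(D\setminus\{a\})$ only along the sequence $a = \text{(largest of $D$)}, \dots, \text{(smallest of $D$)}$. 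But the sequence of removed elements produced by the deletions of $T=(x_1,\dots,x_{r+2})$ is $\delta_{r+1}, \min(\delta_r,\delta_{r+1}), \min(\delta_{r-1},\delta_r), \dots, \min(\delta_1,\delta_2), \delta_1$, which is \emph{not} monotone in general and also omits every interior local maximum of the sequence $(\delta_1,\dots,\delta_{r+1})$ while repeating every interior local minimum. For a concrete failure with $r=4$: take $x_1=0, x_2=4, x_3=5, x_4=16, x_5=18, x_6=26$ (in binary, $N=5$), so $(\delta_1,\dots,\delta_5)=(3,1,5,2,4)$ and the removed elements along the deletion sequence of $T$ are $4,2,2,1,1,3$. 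If $c_4$ on $\{1,\dots,5\}$ is any $4$-monotone coloring with $c_4(\{2,3,4,5\})=+$ and the other four quadruples colored $-$ (such a coloring exists, e.g.\ the one induced by the paper's $c_4$ restricted to a suitable $5$-set), then the deletion sequence of $c_5$ on $T$ reads $-,-,-,+,+,-$, which has two sign changes. So the proposed $c_{r+1}$ is not $(r+1)$-monotone, and the degenerate-case rule you defer cannot repair it because this example never enters the degenerate regime.

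This is in fact the central difficulty the paper is built around: it explicitly notes that the Moshkovitz--Shapira stepping-up coloring is not even transitive for $r>3$, let alone monotone, so an off-the-shelf stepping-up cannot be the right route. The paper's construction is not an Erd\H{o}s--Hajnal--Rado stepping-up on $\{0,1\}^N$. Instead it iteratively builds vertex sets $F_r(n)$ equipped with a type partition, an involution $\sigma_r$, and two interacting orders $<_r$ and $\prec_r$; the ``delta'' analogue $\gamma(A,B)$ is the element of $B$ at the $\prec_{r-1}$-\emph{first} (not $<$-last) equivalence class where $A$ and $B$ differ, and Lemma~\ref{lem-deletion} shows that $\gamma$ behaves like a minimum in the equivalence-class order, with a carefully engineered exception when $\gamma(A,B)$ and $\gamma(B,C)$ are equivalent. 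Monotonicity is then established by Lemma~\ref{lem-Monotone}, a delicate induction that tracks a ``profile'' of the deletion sequence (a word in $\{\leq,=,\geq\}$) together with two indices $i_1,i_2$ marking where the nontrivial part of the profile sits, and proves compatibility of these profiles across one level of the recursion. None of this reduces to simply invoking the $r$-monotonicity of the previous level on a fixed $(r+1)$-set of labels; the equivalence classes, the type structure, and the $\prec_r$ versus $<_r$ distinction are exactly what prevent the sign-change count from doubling in the way your example exhibits. If you want to pursue a stepping-up-style argument, the lesson is that the ``labels'' cannot be plain integers with the max-differing-bit function; you would need to redesign the label space and the pairing function so that the induced order of removals is itself monotone, which is essentially what the $F_r(n)$ construction does.
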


For $r \in \{3,4\}$, the lower bounds from Theorem~\ref{thm-lowerBound} asymptotically match the lower bounds obtained from results of Erd\H{o}s and Szekeres~\cite{erdSze35} and Eli\'{a}\v{s} and Matou\v{s}ek~\cite{eliMat13}, respectively.
Our construction is closer to the construction of Moshkovitz and Shapira~\cite{moshSha14}, which they used to show the tight bound $\OR(\mathcal{P}^r_{n+r-1}) \geq \rho_r(n)+1$.

Our bounds on $\ORS(\mathcal{P}^r_n)$ do not match the upper bounds on $\OR(\mathcal{P}^r_n)$ exactly and thus deciding whether $\OR_{trans}(\mathcal{P}^r_n) = \OR(\mathcal{P}^r_n)$ for all $r$ and $n$ remains an interesting open problem.
It is even possible that $\ORS(\mathcal{P}^r_n) = \OR(\mathcal{P}^r_n)$  for all $r$ and $n$.

Despite having several natural geometric interpretations, the monotone colorings seem to be quite unexplored.
For example, we are not aware of any non-trivial estimate on the number of $r$-monotone colorings of $\mathcal{K}^r_n$ for $r>3$.
Here, we derive both upper and lower bounds for this number.
Note that the bounds are reasonably close together, even with respect to $r$.

\begin{theorem}
\label{thm-signotopesCount}
For integers $r \geq 3$ and $n \geq r$, the number $S_r(n)$ of $r$-monotone colorings of $\mathcal{K}^r_n$ satisfies
\[2^{n^{r-1}/r^{4r}} \leq S_r(n) \leq 2^{2^{r-2}n^{r-1}/(r-1)!}.\]
\end{theorem}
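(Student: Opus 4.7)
The proof has two parts (the upper bound and the lower bound), both of which I would approach by induction on $r$ with base case $r = 3$ (simple pseudoline arrangements).

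For the upper bound, the main structural observation is the following. Given an $r$-monotone coloring $c$ of $\mathcal{K}^r_n$, the restriction of $c$ to $\mathcal{K}^r_{n-1}$ is $r$-monotone, and the auxiliary function $c_n\colon \binom{[n-1]}{r-1} \to \{-1,+1\}$ defined by $c_n(T) = c(T \cup \{n\})$ is an $(r-1)$-monotone coloring of $\mathcal{K}^{r-1}_{n-1}$. Indeed, for every $r$-subset $U \subseteq [n-1]$, applying the monotonicity of $c$ to the $(r+1)$-subset $U \cup \{n\}$ shows that the sequence $(c(U), c_n(U^{(r)}), \ldots, c_n(U^{(1)}))$ has at most one sign change, and in particular $c_n$ is $(r-1)$-monotone. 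Since $c$ is recovered from its restriction to $[n-1]$ together with $c_n$, we obtain $S_r(n) \leq S_r(n-1) \cdot S_{r-1}(n-1)$. Combining this with the classical bound $S_3(n) \leq 2^{n^2}$ on simple pseudoline arrangements and inducting on $r$ yields
\[
\log_2 S_r(n) \;\leq\; 1 + \frac{2^{r-3}}{(r-2)!}\sum_{m=r}^{n-1} m^{r-2} \;\leq\; \frac{2^{r-2}\, n^{r-1}}{(r-1)!},
\]
as desired.

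For the lower bound, I would construct an explicit family of at least $2^{n^{r-1}/r^{4r}}$ distinct $r$-monotone colorings of $\mathcal{K}^r_n$. One approach is to fix a canonical $r$-monotone coloring $c^{\star}$ (for instance, the all-$+$ coloring, or one arising from the construction underlying Theorem~\ref{thm-lowerBound}) and identify a family $\mathcal{F} \subseteq \binom{[n]}{r}$ of size $\Omega(n^{r-1}/r^{4r})$ such that flipping the signs of any subset of $\mathcal{F}$ still produces an $r$-monotone coloring; distinct subsets then yield distinct colorings. The key technical condition is that for every $(r+1)$-subset $S$, the tuples of $\mathcal{F}$ inside $\binom{S}{r}$ must appear as a contiguous block in the lexicographic order on $\binom{S}{r}$, so that no flip can create a second sign change within $S$. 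An alternative, recursive route starts from the classical lower bound $S_3(n) \geq 2^{\Omega(n^2)}$ for pseudoline arrangements and builds larger signotopes up by combining an $r$-monotone coloring on a smaller ground set with $(r-1)$-monotone extensions compatible with it, where the compositions of the loss factors at each inductive level supply the $r^{4r}$ factor in the denominator.

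The main obstacle is the lower bound: exhibiting a sufficiently large family $\mathcal{F}$ of ``independently flippable'' $r$-tuples (or, in the recursive variant, producing many $(r-1)$-monotone extensions of a given $r$-monotone coloring and verifying that distinct pairs yield distinct $r$-monotone colorings) demands a careful combinatorial design, and the monotonicity constraint on every $(r+1)$-subset must be checked in each case. The generous slack hidden in the $r^{4r}$ denominator suggests that a fairly wasteful recursive construction suffices. In contrast, the upper bound follows cleanly from the structural recursion $S_r(n) \leq S_r(n-1)\cdot S_{r-1}(n-1)$ together with the classical base case for $r = 3$.
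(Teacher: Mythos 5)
Your upper-bound argument is essentially the paper's. The paper considers, for each $i\in\{r,\dots,n\}$, the projection $p_i(c)$ of the coloring $c$ onto the edges through $i$, observes that each $p_i(c)$ is an $(r-1)$-monotone coloring of $\mathcal{K}^{r-1}_{i-1}$, and that $c\mapsto(p_r(c),\dots,p_n(c))$ is injective. Your single-step recursion $S_r(n)\leq S_r(n-1)\cdot S_{r-1}(n-1)$, unrolled, gives exactly the same product $\prod_{i=r}^{n}S_{r-1}(i-1)$ (using $S_r(r)=S_{r-1}(r-1)=2$), and the base case and power-sum estimate are identical. So this half is correct and follows the paper's route.

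The lower bound, however, has a genuine gap. Your key condition --- that for every $(r+1)$-set $S$ the flippable $r$-tuples $\mathcal{F}\cap\binom{S}{r}$ form a \emph{contiguous block} in the lexicographic order --- is not sufficient to guarantee that arbitrary sign flips on $\mathcal{F}$ preserve $r$-monotonicity. Even a contiguous block of length $2$ fails: if the unflipped sequence on $S$ is $(-,-,+,+,\dots)$ and $\mathcal{F}$ hits positions $1$ and $2$, flipping position $1$ but not $2$ yields $(+,-,+,+,\dots)$, which has two sign changes. Similarly, with the all-$+$ canonical coloring you suggest, flipping a single interior position already breaks monotonicity, so one would be forced to restrict to the two boundary positions of each $(r+1)$-set, which is far more restrictive than ``contiguous block'' and undermines the counting. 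The paper's construction enforces a much stronger structural property: it recursively partitions $[n]$ into $r$ blocks, classifies each $r$-subset by the integer composition of $r$ recording how many of its vertices fall in each block, and uses a sign rule on compositions so that the \emph{only} flippable ($0$-colored) edges are those with exactly one vertex per block satisfying a balance condition $\sum_{i\text{ even}}v'_i=\sum_{i\text{ odd}}v'_i$. It then shows (Lemma~\ref{lem-countSignotope}) via case analysis on the reduction of the composition of the $(r+1)$-set that in the sequence $(c(S^{(r+1)}),\dots,c(S^{(1)}))$ there is at most one $0$ among the nonconstant runs, and that $0$ sits precisely at the sign transition, so that either replacement keeps the sequence with at most one sign change. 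Establishing this positional control, not mere contiguity, is the main technical content of the lower bound, and the subsequent count of $0$-colored edges (Lemma~\ref{lem-countEstimate}) via balanced tuples is what produces the $r^{O(r)}$ loss. Your sketch names neither ingredient, so as written it does not yield the claimed bound.
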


As we will see in Subsection~\ref{subsec-orderType}, Theorem~\ref{thm-signotopesCount} is a generalization of the well-known fact that the number of simple arrangements of $n$ pseudolines is $2^{\Theta(n^2)}$.
This fact follows from Theorem~\ref{thm-signotopesCount} by setting $r=3$.
However, the constants in the exponents in the bounds from Theorem~\ref{thm-signotopesCount} are not the best known.
Felsner and Valtr~\cite{felVal11} showed that the number of simple arrangements of $n$ pseudolines is at most $2^{0.657n^2}$, improving the previous bounds $2^{0.792n^2}$ by Knuth~\cite{knuth92} and $2^{0.697n^2}$ by Felsner~\cite{felsner97}.
Felsner and Valtr~\cite{felVal11} also proved the lower bound $2^{0.188n^2}$.
All these bounds apply also to $S_3(n)$.

In the rest of this section we use two geometric interpretations of monotone colorings to show connections between the problem of estimating $\ORS(\mathcal{P}^r_n)$ and some geometric Ramsey-type problems that have been recently studied.

We note that besides the following two geometric interpretations of monotone colorings, there is also a third one, which was discovered by Ziegler~\cite{zieg93}.
He showed that monotone colorings can be interpreted as extensions of the cyclic arrangement of hyperplanes with a pseudohyperplane.

\subsection{Higher-order Erd\H{o}s--Szekeres theorems}
\label{subsec-higherOrderES}

Very recently, Miyata~\cite{miya17} introduced a new geometric interpretation of $(k+2)$-monotone colorings for $k \in \mathbb{N}$, which are called \emph{degree-$k$ oriented matroids} in~\cite{miya17}.
This interpretation concerns \emph{$k$-intersecting pseudoconfigurations of points} (or \emph{$k$-pseudoconfigurations}, for short), which are formed by a pair $(P,L)$ satisfying the following conditions.
The set $P=\{p_1,\dots,p_n\}$ contains $n$ points in the Euclidean plane ordered by their increasing $x$-coordinates and the set $L$ is a collection of $x$-monotone Jordan arcs such that:
\begin{enumerate}[label=(\roman*)]
\item for every $l \in L$, there are at least $k+1$ points of $P$ lying on $l$,
\item for every $(k+1)$-tuple of distinct points of $P$, there is a unique curve $l$ from $L$ passing through each point of this $(k+1)$-tuple,
\item any two distinct curves from $L$ cross at most $k$ times.\footnote{We count all crossings, not only those contained in $P$.}
\end{enumerate}

This notion naturally generalizes the concept of \emph{generalized point sets}~\cite{goodmanPollack84} (sometimes called \emph{abstract order types}), which correspond to $1$-pseudocon\-figurations.
It also captures the essential combinatorial properties of configurations of points and graphs of polynomial functions, which is a setting considered by Eli\'{a}\v{s} and Matou\v{s}ek~\cite{eliMat13} in their study of higher-order Erd\H{o}s--Szekeres theorems.

\begin{figure}[ht]
\centering
\includegraphics{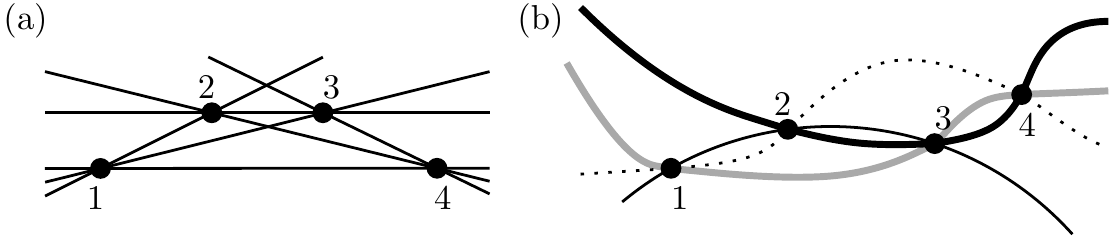}
\caption{Examples of simple $k$-pseudoconfigurations of four points for $k=1$ (part~(a)) and $k=2$ (part~(b)).
The sign function of the $1$-pseudoconfiguration maps each triple of points to $-$.
The sign function of the $2$-pseudoconfiguration assigns $+$ to the only $4$-tuple of points.}
\label{fig-pseudoconfiguration}
\end{figure}

A $k$-pseudoconfiguration $(P,L)$ of points is \emph{simple} if each curve from~$L$ passes through exactly $k+1$ points of $P$; see Figure~\ref{fig-pseudoconfiguration}.
If $(P,L)$ is simple, we let $l_{i_1,\dots,i_{k+1}}$ be the curve from~$L$ passing through points $p_{i_1},\dots,p_{i_{k+1}}$.
Each curve $l$ from~$L$ is a graph of a continuous function $f_l \colon \mathbb{R} \to \mathbb{R}$ and we let $l^- \colonequals \{(x,y) \in \mathbb{R}^2 \colon y < f_l(x)\}$. 
A \emph{sign function} of a simple $k$-pseudoconfiguration $(P,L)$ is a function $f \colon \binom{P}{k+2} \to \{-,+\}$ such that, given $\{i_1,\dots,i_{k+2}\} \in \binom{P}{k+2}$ with $i_1 < \dots < i_{k+2}$, $f(p_{i_1},\dots,p_{i_{k+2}})=-$ if and only if $p_{i_{k+2}} \in l^-_{i_1,\dots,i_{k+1}}$.

Miyata~\cite{miya17} proved the following correspondence between $(k+2)$-monotone colorings of~$K^{k+2}_n$ and simple $k$-pseudoconfigurations of $n$ points.

\begin{theorem}{\cite{miya17}}
\label{thm-interPseudoconf}
For $k,n \in \mathbb{N}$, there is a one-to-one correspondence between sign functions of simple $k$-pseudoconfigurations of $n$ points and $(k+2)$-monotone colorings of $\mathcal{K}^{k+2}_n$.
The monotone coloring corresponding to a $k$-pseudoconfiguration $\mathcal{P}$ is the sign function of $\mathcal{P}$.
\end{theorem}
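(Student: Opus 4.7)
The plan is to establish the bijection by showing that the set of sign functions of simple $k$-pseudoconfigurations of $n$ points and the set of $(k+2)$-monotone colorings of $\mathcal{K}^{k+2}_n$ coincide as subsets of $\{-,+\}^{\binom{[n]}{k+2}}$. The case $k=1$ is the classical equivalence between simple pseudoline arrangements and rank-$3$ signotopes (see, e.g.,~\cite{felWei01,zieg93}), which I would keep in mind as a template.

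For the inclusion ``sign function is a monotone coloring'', I fix a simple $k$-pseudoconfiguration $(P,L)$ with sign function $f$ and any $(k+3)$-tuple $S=(p_{i_1},\dots,p_{i_{k+3}})$ ordered by $x$-coordinate. For $j\in[k+2]$, $f(S^{(j)})$ records the position of $p_{i_{k+3}}$ relative to the curve $C_j\colonequals l_{\{i_1,\dots,i_{k+2}\}\setminus\{i_j\}}$, while $f(S^{(k+3)})$ records the position of $p_{i_{k+2}}$ relative to $l_{i_1,\dots,i_{k+1}}$, which equals $C_{k+2}$ by uniqueness (axiom~(ii)). The crux is the claim that $C_1(x_{i_{k+3}}),\dots,C_{k+2}(x_{i_{k+3}})$ appear in order monotone in~$j$. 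Any two curves $C_j,C_{j'}$ share exactly $k$ points (by~(ii)), and axiom~(iii) forces all their crossings to occur at these shared points, which lie strictly to the left of $x_{i_{k+3}}$; analyzing consecutive pairs $C_j,C_{j+1}$ (which share the $k$ points indexed by $[k+2]\setminus\{j,j+1\}$) yields pairwise orders at $x_{i_{k+3}}$, and piecing these together produces a totally monotone linear order. Consequently $p_{i_{k+3}}$ meets this family at most once, so the signs $f(S^{(k+2)}),\dots,f(S^{(1)})$ flip at most once; a short case analysis exploiting $C_{k+3}=C_{k+2}$ and tracking the positions of $p_{i_{k+2}}$ and $p_{i_{k+3}}$ relative to $C_{k+2}$ shows that the entry $f(S^{(k+3)})$ sits consistently at the head of the sequence, giving a single sign change in the whole of $(f(S^{(k+3)}),\dots,f(S^{(1)}))$.

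For the reverse inclusion I would construct a realizing pseudoconfiguration inductively on $n$. The base $n=k+2$ is trivial. Given a realization on $p_1,\dots,p_{n-1}$, I add a rightmost point $p_n$ and, for each $k$-subset $T\subseteq[n-1]$, introduce a new $x$-monotone Jordan arc through the $k+1$ points indexed by $T\cup\{n\}$. The required $y$-orderings of these new arcs (with respect to each other, to the existing points, and to the existing curves) are dictated by the sign values $c(E)$ for $(k+2)$-subsets $E$ containing~$n$: the values $c(\{i_1,\dots,i_{k+1},n\})$ fix the side of each old curve on which $p_n$ lies, while the values $c(\{i_1,\dots,i_k,n,j\})$ fix the position of each $j\in[n-1]$ relative to the new arc through $\{i_1,\dots,i_k,n\}$. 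The monotonicity of $c$ on $(k+3)$-tuples containing~$n$ is exactly the compatibility condition guaranteeing that these constraints are simultaneously realizable by $x$-monotone Jordan arcs and that axiom~(iii) is preserved (no pair of curves is forced to cross more than $k$ times). One can then draw the arcs explicitly, e.g., by piecewise-linear interpolation between their prescribed positions in each inter-point $x$-slab. By construction the sign function of the resulting pseudoconfiguration equals $c$.

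The chief obstacle is the inductive backward construction: although the sign values completely determine the required ordering data, one must certify that these local orderings are globally realizable without any pair of curves exceeding $k$ crossings, and the argument must be set up so that each inductive step preserves all three pseudoconfiguration axioms simultaneously. It is precisely the monotonicity hypothesis on $c$—strictly stronger than transitivity—that makes this compatibility automatic, mirroring the way crossing counts of the $C_j$'s entered the forward direction. The forward direction is of moderate difficulty by comparison, amounting to careful bookkeeping with crossings among curves through $k+1$ points.
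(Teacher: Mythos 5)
The paper does not prove Theorem~\ref{thm-interPseudoconf}: it is stated with the attribution to Miyata~\cite{miya17} and used as a black box, the only proofs given in the paper being those of Theorems~\ref{thm-lowerBound} and~\ref{thm-signotopesCount}. There is therefore no in-paper argument to measure your proposal against; the actual proof is in Miyata's paper.

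Assessing the sketch on its own terms, both directions have real gaps. In the forward direction the entire argument rests on the unproved claim that $C_1(x_{i_{k+3}}), \dots, C_{k+2}(x_{i_{k+3}})$ are monotone in $j$. ``Analyzing consecutive pairs \dots and piecing these together'' does not yield this: knowing the relative order of each consecutive pair $(C_j, C_{j+1})$ to the right of their rightmost shared point does not preclude those comparisons from alternating direction, so the resulting sequence need not be monotone a~priori. Already for $k=1$ one must use all three pairwise crossings simultaneously; for general $k$ a genuine argument over all $\binom{k+2}{2}$ pairs (or an induction on $k$) is needed, and that is exactly where the work of the theorem lies. The backward direction (realizability) is the deeper half, and your inductive description is a promise rather than a proof. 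When you add $p_n$ and the new arcs through each $T \cup \{n\}$ with $T \in \binom{[n-1]}{k}$, you must certify that any two \emph{new} arcs cross at most $k$ times and that all arcs can be drawn $x$-monotone without pushing any pair over the bound of axiom~(iii); these constraints concern pairs of curves both passing through $p_n$ and are not encoded in any single $(k+3)$-tuple of vertices, so the monotonicity hypothesis on $c$ does not apply to them directly. A global consistency argument is required and is missing --- asserting that monotonicity is ``exactly the compatibility condition'' restates what must be proved rather than proving it.
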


A subset $S$ of $P$ is \emph{$(k+1)$st order monotone} if the sign function of $(P,L)$ attains only $-$ or only $+$ value on all of $(k+2)$-tuples of $S$. 
Theorem~\ref{thm-interPseudoconf} immediately gives the following corollary.

\begin{corollary}
\label{cor-signotopeES}
For all positive integers $k$ and $n$, the number $\ORS(\mathcal{P}^{k+2}_n)$ is the minimum positive integer $N$ such that every simple $k$-pseudoconfiguration of $N$ points contains a $(k+1)$st order monotone subset of size $n$.
\end{corollary}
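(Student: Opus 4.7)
The plan is to unwind the definitions on top of the bijection in Theorem~\ref{thm-interPseudoconf}. First, I would recall from the discussion just before Theorem~\ref{thm-lowerBound} that $\ORS(\mathcal{P}^r_n) = \ORS(\mathcal{K}^r_n)$ for every $r \geq 2$: since every $r$-monotone coloring is transitive, a monochromatic monotone path on $n$ vertices already forces a monochromatic complete ordered hypergraph on the same vertex set. Applying this with $r = k+2$, it suffices to show that the minimum $N$ for which every simple $k$-pseudoconfiguration of $N$ points contains a $(k+1)$st order monotone subset of size $n$ equals $\ORS(\mathcal{K}^{k+2}_n)$.

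Next, I would invoke the one-to-one correspondence from Theorem~\ref{thm-interPseudoconf}. Given a simple $k$-pseudoconfiguration $(P,L)$ of $N$ points with $P=\{p_1,\dots,p_N\}$ listed by increasing $x$-coordinate, its sign function $f \colon \binom{P}{k+2} \to \{-,+\}$ is precisely a $(k+2)$-monotone coloring $c$ of $\mathcal{K}^{k+2}_N$, where vertex $i$ of $\mathcal{K}^{k+2}_N$ is identified with $p_i$ and $\prec$ is the $x$-coordinate order; conversely, every $(k+2)$-monotone coloring arises in this way. Because this identification preserves the underlying order on vertices, monochromatic ordered sub-hypergraphs on one side correspond bijectively to monochromatic ordered sub-hypergraphs on the other.

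Third, I would match the two notions of monochromaticity. By definition, a subset $S \subseteq P$ of size $n$ is $(k+1)$st order monotone if the sign function attains only one value on $\binom{S}{k+2}$, which translates under the correspondence to the statement that the ordered sub-hypergraph of $\mathcal{K}^{k+2}_N$ induced by the vertices corresponding to $S$ is a monochromatic copy of $\mathcal{K}^{k+2}_n$. Combining this equivalence with the reduction from the first paragraph yields the stated characterization of $\ORS(\mathcal{P}^{k+2}_n)$.

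There is essentially no obstacle here: the entire argument is a routine bookkeeping exercise on top of Theorem~\ref{thm-interPseudoconf}. The only point requiring a moment of care is verifying that the bijection respects the vertex orderings, so that the notion of ordered sub-hypergraph isomorphism in the definition of $\ORS$ aligns with the $x$-coordinate ordering used to define the sign function; this is immediate from how the correspondence in Theorem~\ref{thm-interPseudoconf} is set up.
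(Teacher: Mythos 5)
Your proposal is correct and follows the paper's approach: the paper simply states that Theorem~\ref{thm-interPseudoconf} immediately gives the corollary, and you have spelled out exactly the unwinding the paper has in mind, namely the bijection of sign functions with $(k+2)$-monotone colorings, the identification of $(k+1)$st order monotone subsets with monochromatic copies of $\mathcal{K}^{k+2}_n$, and the paper's noted identity $\ORS(\mathcal{P}^{k+2}_n)=\ORS(\mathcal{K}^{k+2}_n)$ coming from transitivity of monotone colorings.
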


Generalizing the Erd\H{o}s--Szekeres Theorem~\cite{erdSze35} to higher orders, Eli\'{a}\v{s} and Matou\v{s}ek~\cite{eliMat13} introduced the following more restrictive setting in which, for every $l \in L$, $f_l$ is a function whose $(k+1)$st derivative is everywhere non-positive or everywhere non-negative.
A planar point set $P$ is in \emph{$(k+1)$-general position} if no $k+2$ points of $P$ lie on the graph of a polynomial of degree at most $k$.
By Newton's interpolation, every $(k+1)$-tuple of points from $P$ determines a unique polynomial of degree at most $k$ whose graph contains this $(k+1)$-tuple and thus $P$ determines a simple $k$-pseudoconfiguration.
Thus, in this setting, we can consider $(k+1)$st order monotonicity with respect to the graphs of the polynomials of degree at most $k$.
Let $\ES_{k+1}(n)$ be the smallest positive integer $N$ such that every set of $N$ points in $(k+1)$-general position contains a $(k+1)$st order monotone subset of size~$n$.

By Corollary~\ref{cor-signotopeES}, we have $\ES_{k+1}(n) \leq \ORS(\mathcal{P}^{k+2}_n)$ for all positive integers $k$ and $n$.
It is known that this inequality is tight for $k =1$~\cite{erdSze35}.
Eli\'{a}\v{s} and Matou\v{s}ek~\cite{eliMat13} showed that $\ES_3(n)=\tow_3(\Theta(n))$ and thus $\ES_3(n)$ and $\ORS(\mathcal{P}^4_n)$ have asymptotically the same growth rate.
They also asked about the growth rate of $\ES_{k+1}(n)$ for $k > 2$.
A related interesting open question is whether $\ES_{k+1}(n)$ and $\ORS(\mathcal{P}^{k+2}_n)$ are the same, at least asymptotically.

By Corollary~\ref{cor-signotopeES}, it suffices to show that the extremal configurations for $\ORS(\mathcal{P}^{k+2}_n)$ can be `realized' by graphs of polynomial functions of degree at most $k$.
It is possible that the configurations obtained in the proof of Theorem~\ref{thm-lowerBound} admit such realizations, which would solve the open problem of Eli\'{a}\v{s} and Matou\v{s}ek about the growth rate of $\ES_{k+1}(n)$.
We hope to discuss this direction in future work.

\subsection{Arrangements of pseudohyperplanes and order-type homogeneous point sets}
\label{subsec-orderType}

Felsner and Weil~\cite{felWei01} showed that, for every $r \geq 3$, there is a one-to-one correspondence between $r$-monotone colorings of $\mathcal{K}^r_n$, which they call \emph{$r$-signotopes}, and arrangements of $n$ pseudohyperplanes in $\mathbb{R}^{r-1}$ that admit `sweeping'.

For an integer $d \geq 2$, a \emph{pseudohyperplane} $H$ in $\mathbb{R}^d$ is a homeomorph of a hyperplane in $\mathbb{R}^d$ such that the two connected components of $\mathbb{R}^d \setminus H$ are homeomorphic to an open $d$-dimensional ball.
Two pseudohyperplanes $H_1$ and $H_2$ \emph{cross}, if $H_i$ intersects both components of $\mathbb{R}^d \setminus H_{3-i}$ for every $i \in \{1,2\}$.
An \emph{arrangement of pseudohyperplanes} in $\mathbb{R}^d$ (or \emph{$d$-arrangement}, for short) consists of pseudohyperplanes $H_1,\dots,H_n$ in $\mathbb{R}^d$ such that any two pseudohyperplanes $H_i$ and $H_j$ intersect in a pseudohyperplane in $H_i \cong H_j \cong \mathbb{R}^{d-1}$ and they cross at their intersection.
Moreover, for every $j \in [n]$, the intersections $H_i \cap H_j$, where $i \in [n] \setminus \{j\}$, form an arrangement of pseudohyperplanes in $H_j \cong \mathbb{R}^{d-1}$.
A $d$-arrangement $\mathcal{A}$ is \emph{simple} if any $d+1$ pseudohyperplanes from $\mathcal{A}$ have an empty intersection.

We assume that every $d$-arrangement $\mathcal{A}$ of pseudohyperplanes $H_1,\dots,H_n$ is \emph{normal}, that is, $\mathcal{A}$ is simple and is embedded in $\mathbb{R}^d$ in the following normalized way.
Assume that $\mathcal{A}$ is embedded in the hypercube $[0,1]^d$ and, for $i \in [d-1]$, let $I_i$ be the $(d-i)$-dimensional subspace of $\mathbb{R}^d$ that  contains the side of $[0,1]^d$, which is obtained by setting the last $i$ coordinates to $0$.
We demand that $\mathcal{A} \cap I_i$ is a $(d-i)$-arrangement of $n$ pseudohyperplanes.
Moreover, the pseudohyperplanes in $\mathcal{A}$ are labeled by increasing first coordinate at their intersection with $I_{d-1}$.
The assumption that $\mathcal{A}$ is embedded in $[0,1]^d$ is only for convenience so that all intersections of $d$ pseudohyperplanes from $\mathcal{A}$ are contained in $[0,1]^d$.
The reader may consider ``spaces at infinity'' instead by defining $I_i$ to be the $(d-i)$-dimensional affine subspace obtained by setting the last $i$ coordinates to some sufficiently small number.

A \emph{sign function} of a normal $d$-arrangement $\mathcal{A}$ of $n$ pseudohyperplanes $H_1,\dots,H_n$ is a function $f \colon \binom{[n]}{d+1} \to \{-,+\}$ such that, for given $i_1<\dots<i_{d+1}$, $f(i_1,\dots,i_{d+1})=-$ if and only if the pseudoline $H_{i_3} \cap \cdots \cap H_{i_{d+1}}$, which  is oriented away from~$I_1$, intersects $H_{i_1}$ before $H_{i_2}$.

A normal $d$-arrangement $\mathcal{A}$ is called a \emph{$C_d$-arrangement} if the normal $(d-1)$-arrangement formed by $H \cap I_1$ for $H \in \mathcal{A}$ has no $+$ sign in its sign function.
We note that every normal arrangement of \emph{pseudolines} (that is, pseudohyperplanes in~$\mathbb{R}^2$) is a $C_2$-arrangement, but this is no longer true for $C_d$-arrangements with $d \geq 3$.
This is because, for $d \geq 3$, the arrangement induced by $\mathcal{A}$ is not uniquely determined, while for $C_d$-arrangements this arrangement must be the ``minimal one with respect to the sign function''.
An example of a $C_2$-arrangement can be found in Figure~\ref{fig-CdArrangement}.

\begin{figure}[ht]
\centering
\includegraphics{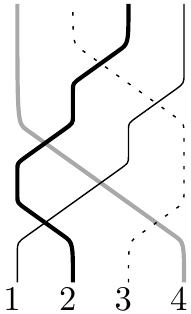}
\caption{A $C_2$-arrangement of four pseudolines.
Here, the sign function assigns $-$ to the triple $\{1,2,3\}$ and $+$ to the triple $\{2,3,4\}$.
}
\label{fig-CdArrangement}
\end{figure}

\begin{theorem}{\cite{felWei01}}
\label{thm-interHyperpl}
For $d \geq 2$ and $n \in \mathbb{N}$, there is a one-to-one correspondence between sign functions of $C_d$-arrangements of $n$ pseudohyperplanes in $\mathbb{R}^{d}$ and $(d+1)$-monotone colorings of~$\mathcal{K}^{d+1}_n$.
The monotone coloring corresponding to an arrangement $\mathcal{A}$ is the sign function of $\mathcal{A}$.
\end{theorem}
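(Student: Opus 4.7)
The plan is to exhibit an explicit bijection by treating the sign function of a $C_d$-arrangement as the candidate monotone coloring and proving the two maps are inverses, proceeding by induction on $d$ with the base $d=2$ being the classical Goodman--Pollack correspondence between arrangements of pseudolines and allowable sequences (which are exactly $3$-signotopes). The inductive hypothesis says that every $C_{d-1}$-arrangement has sign function forming a $d$-monotone coloring and, conversely, every $d$-monotone coloring is uniquely realized. The two directions then plug together through the normalization data: the $(d-1)$-arrangement $\mathcal{A}\cap I_1$ sitting on one face of $[0,1]^d$.

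For the forward direction (arrangement to coloring), I would fix a normal $C_d$-arrangement $\mathcal{A}$ of $H_1,\dots,H_n$ and take $d+2$ indices $i_1<\dots<i_{d+2}$. The intersection $\ell=H_{i_3}\cap\dots\cap H_{i_{d+1}}$ is a pseudoline and we must compare the order in which it meets the remaining three hyperplanes $H_{i_1},H_{i_2},H_{i_{d+2}}$; more generally, fixing any $d-1$ of the $d+2$ hyperplanes gives a pseudoline whose crossing pattern with the remaining three determines three signs in the candidate sign vector $(c(S^{(d+2)}),\dots,c(S^{(1)}))$. Because the three remaining hyperplanes cross $\ell$ in some linear order and the signs $c(S^{(k)})$ record binary comparisons between adjacent pairs along $\ell$, there is at most one sign change in the vector — this is precisely the monotonicity condition. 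The verification reduces, after restricting to $\ell$, to a one-dimensional fact about three points on a line, and it is here that the $C_d$-hypothesis (rather than an arbitrary normal arrangement) is needed to align the orientation ``away from $I_1$'' consistently across all $(d-1)$-subfamilies.

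For the backward direction I would build $\mathcal{A}$ inductively using the sweeping interpretation. Given a $(d+1)$-monotone coloring $c$ of $\mathcal{K}^{d+1}_n$, restrict $c$ to the ``boundary face'' produced by fixing one coordinate: this yields a $d$-monotone coloring, which by the inductive hypothesis corresponds to a unique $C_{d-1}$-arrangement; this arrangement becomes $\mathcal{A}\cap I_1$. The remaining coordinate direction parametrizes a sweep of a $(d-1)$-dimensional arrangement, and the monotonicity condition on $(d+2)$-tuples is exactly what is needed to guarantee that this sweep is well-defined — each elementary move corresponds to a local flip that passes through a single point where $d$ pseudohyperplanes meet, and the ``at most one sign change'' constraint forbids illegal reversals during the sweep. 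One then glues together the intermediate $(d-1)$-arrangements to obtain the full $C_d$-arrangement $\mathcal{A}$ in $[0,1]^d$, and verifies that its sign function recovers $c$.

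The main obstacle is the backward direction: one must argue that the sweep constructed from the coloring is globally consistent, i.e.\ that no two ``local moves'' dictated by the coloring conflict and that the resulting family of topological surfaces genuinely forms a pseudohyperplane arrangement (each pair intersecting in a pseudohyperplane of one lower dimension, no $d+1$ meeting, and so on). Checking this consistency is where the $(d+1)$-monotonicity is used in an essential way, and where careful bookkeeping — ideally phrased in the language of cellular decompositions of $[0,1]^d$ and their refinements — replaces the easier one-dimensional argument of the forward direction. I would expect the bulk of the technical work to live in this step and in making the normalization to $C_d$-arrangements (rather than arbitrary normal $d$-arrangements) match up cleanly with the chosen ``delete-index'' ordering of the sign vector.
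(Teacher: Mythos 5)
This theorem is cited from Felsner and Weil \cite{felWei01}; the paper does not prove it, so there is no proof of the author's to compare your sketch against. Judged on its own terms, your sketch has the right high-level shape (induction on $d$, base case in the plane, a ``sweeping'' reconstruction for the reverse direction), which is indeed the route Felsner and Weil take. However, your forward-direction argument contains a genuine gap.

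You claim that for a $(d+2)$-tuple $S=(H_{i_1},\dots,H_{i_{d+2}})$, verifying monotonicity of $(c(S^{(d+2)}),\dots,c(S^{(1)}))$ reduces to ``fixing any $d-1$ of the $d+2$ hyperplanes'' and comparing the crossing order of the remaining three with the resulting pseudoline — ``a one-dimensional fact about three points on a line.'' That reduction does not match the definition of the sign function. For each $k$, $c(S^{(k)})$ is computed by deleting $H_{i_k}$, taking the pseudoline formed by the $d-1$ \emph{largest-indexed} remaining hyperplanes, and comparing its first two crossings with the two \emph{smallest-indexed} remaining ones. As $k$ ranges from $d+2$ down to $1$, the pseudoline changes: for $k\ge 4$ it is $\bigcap_{j\in\{3,\dots,d+2\}\setminus\{k\}}H_{i_j}$, while for $k\in\{1,2,3\}$ it is $\bigcap_{j\ge 4}H_{i_j}$, and the pair of hyperplanes being compared also changes for $k\in\{1,2\}$. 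So the $d+2$ signs in the vector do \emph{not} live on a single pseudoline, and the check does not reduce to ordering three points on a line. Relating sign functions across these different pseudolines (and across the two ``regimes'' $k\ge 3$ versus $k\le 2$) is precisely the nontrivial content of the forward direction, and your sketch does not supply an argument for it. The base case is also off: the $d=2$ fact you need is the bijection between simple pseudoline arrangements and $3$-signotopes, which is not the same as the Goodman--Pollack allowable-sequence encoding, though the two are related; conflating them leaves a step unexplained. The backward direction (sweep consistency) is, as you note yourself, only a plan, and it is where essentially all the work in \cite{felWei01} lives.
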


A subset $S$ of $\mathcal{A}$ is \emph{order-type homogeneous} if the sign function of $\mathcal{A}$ attains only $-$ or only $+$ values on all of $(d+1)$-tuples of pseudohyperplanes from~$S$. 
Theorem~\ref{thm-interHyperpl} gives the following corollary.

\begin{corollary}
\label{cor-signotopesOrderType}
For all positive integers $d \geq 2$ and $n$, the number $\ORS(\mathcal{P}^{d+1}_n)$ is the minimum positive integer $N$ such that every $C_d$-arrangement of $N$ pseudohyperplanes contains an order-type homogeneous subset of size $n$.
\end{corollary}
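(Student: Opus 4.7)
The plan is to derive this corollary as a direct consequence of Theorem~\ref{thm-interHyperpl} together with the already-noted identity $\ORS(\mathcal{P}^r_n)=\ORS(\mathcal{K}^r_n)$ for all $r\geq 2$ and $n\in\mathbb{N}$. Concretely, setting $r=d+1$, the quantity $\ORS(\mathcal{P}^{d+1}_n)$ equals the minimum $N$ such that every $(d+1)$-monotone coloring of $\mathcal{K}^{d+1}_N$ contains a monochromatic copy of $\mathcal{K}^{d+1}_n$; this is what I would invoke first.

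Next, I would translate the two sides of this characterization across the bijection of Theorem~\ref{thm-interHyperpl}. In one direction, take a $C_d$-arrangement $\mathcal{A}$ of $N$ pseudohyperplanes in $\mathbb{R}^d$ and let $c_\mathcal{A}$ be its sign function; by Theorem~\ref{thm-interHyperpl}, $c_\mathcal{A}$ is a $(d+1)$-monotone coloring of $\mathcal{K}^{d+1}_N$. A subset $S\subseteq\mathcal{A}$ of size $n$ is order-type homogeneous exactly when $c_\mathcal{A}$ restricted to $\binom{S}{d+1}$ is constant, which is exactly the condition that the corresponding $n$ vertices span a monochromatic $\mathcal{K}^{d+1}_n$. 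In the other direction, any $(d+1)$-monotone coloring $c$ of $\mathcal{K}^{d+1}_N$ is, again by Theorem~\ref{thm-interHyperpl}, the sign function of some $C_d$-arrangement of $N$ pseudohyperplanes, and a monochromatic $\mathcal{K}^{d+1}_n$ in $c$ gives an order-type homogeneous subset of size $n$.

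Combining these two translations, the property ``every $(d+1)$-monotone coloring of $\mathcal{K}^{d+1}_N$ contains a monochromatic $\mathcal{K}^{d+1}_n$'' is logically equivalent to ``every $C_d$-arrangement of $N$ pseudohyperplanes contains an order-type homogeneous subset of size $n$''. Taking the minimum $N$ for which the latter holds therefore yields precisely $\ORS(\mathcal{K}^{d+1}_n)=\ORS(\mathcal{P}^{d+1}_n)$, proving the corollary. There is no real obstacle here; the only subtlety to flag in the write-up is to invoke the identity $\ORS(\mathcal{P}^{d+1}_n)=\ORS(\mathcal{K}^{d+1}_n)$ explicitly, so that the reader sees why a monochromatic clique, rather than a monochromatic monotone path, is the right object on the hypergraph side of the bijection.
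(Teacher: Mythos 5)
Your proposal is correct and matches the paper's (implicit) argument: the paper states this corollary as an immediate consequence of Theorem~\ref{thm-interHyperpl}, relying on exactly the translation you describe and on the identity $\ORS(\mathcal{P}^{d+1}_n)=\ORS(\mathcal{K}^{d+1}_n)$ noted earlier in Section~\ref{sec:ourResults}. You rightly flag that invoking this identity is the one step worth making explicit, since Theorem~\ref{thm-interHyperpl} and the definition of an order-type homogeneous subset speak about monochromatic cliques rather than monotone paths.
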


An \emph{orientation} of a $(d+1)$-tuple of points $(p_1,\dots,p_{d+1})$ with $p_i = (a_{i,1},\dots,a_{i,d}) \in \mathbb{R}^d$ is defined as
\[{\rm sgn} \det
\begin{pmatrix}
1 & 1 & 1 & 1 \\
a_{1,1} & a_{2,1} & \cdots & a_{d+1,1} \\
\vdots & \vdots & \cdots & \vdots \\
a_{1,d} & a_{2,d} & \cdots & a_{d+1,d}
\end{pmatrix}
.\]
A sequence of points from $\mathbb{R}^d$, $d \geq 2$, is \emph{order-type homogeneous} if all $(d+1)$-tuples of points from this sequence have the same orientation.
For positive integers $n$ and $d \geq 2$, let $\OT_d(n)$ be the minimum positive integer $N$ such that every sequence of $N$ points from $\mathbb{R}^d$ contains an order-type homogeneous subsequence with $n$ points.
Using geometric duality, the notion of order-type homogeneous sequence of points from $\mathbb{R}^d$ transcribes to sequences of hyperplanes in $\mathbb{R}^d$.
Thus $\OT_d(n)$ is also the minimum positive integer $N$ such that every sequence of $N$ hyperplanes in $\mathbb{R}^d$ contains an order-type homogeneous subsequence of size $n$.

The function $\OT_d(n)$ was considered by many researchers~\cite{barMatPor16,cfpss13,emrs14,suk14}.
Suk~\cite{suk14} showed that $\OT_d(n) \leq \tow_d(O(n))$.
The results of B\'{a}r\'{a}ny, Matou\v{s}ek, and P\'{o}r~\cite{barMatPor16} and Eli\'{a}\v{s}, Matou\v{s}ek, Rold\'{a}n-Pensado, and Safernov\'{a}~\cite{emrs14} give an asymptotically matching lower bound $\OT_d(n) \geq \tow_d(\Omega(n))$.
For $d \geq 3$, the arrangements obtained from their lower bound on $\OT_d(n)$ are not $C_d$-arrangements.
A natural problem is to decide whether one can obtain similar lower bounds on $\OT_d(n)$ when restricted to $C_d$-arrangements of hyperplanes.
Corollary~\ref{cor-signotopesOrderType} combined with Theorem~\ref{thm-lowerBound} suggests that this might be true, as we obtain such bounds for $C_d$-arrangements of pseudohyperplanes for every $d \geq 2$.

\section{Proof of Theorem~\ref{thm-lowerBound}}
\label{sec:proofLowerBound}

Here, for positive integers $n$ and $r$ with $r \geq 3$, we construct an $r$-monotone coloring $c_r$ of~$\mathcal{K}^r_N$ with no monochromatic copy of $\mathcal{P}^r_{2n+r-1}$ and with $N \geq \tow_{r-1}((1-o(1))n)$.
First, we describe the construction of $c_r$ and show that $c_r$ contains no long monochromatic monotone $r$-uniform paths.
Then we prove that the coloring $c_r$ satisfies the monotonicity property.

Let us start with a brief overview of the construction of the coloring $c_r$.
It is carried out iteratively with respect to $r$.
For every positive integer $n$, we will construct sets $F_r(n)$ with $r \geq 1$ such that $|F_1(n)|=2$, $|F_2(n)|=2n$, and $|F_r(n)| = 2^{|F_{r-1}(n)|/2}$ for $r \geq 3$.
The 2-coloring $c_r$ will have $F_r(n)$ as its vertex set.
We will have a partition of $F_r(n)$ into sets $F^-_r(n)$, $F^+_r(n)$, and a bijection $\sigma_r \colon F^-_r(n) \rightarrow F^+_r(n)$.
Elements $A,B \in F_r(n)$ will be called \emph{equivalent}, written $A \equiv_r B$, if $A=B$, $A=\sigma_r(B)$, or $B=\sigma_r(A)$.
We say that elements from $F^-_r(n)$ and $F^+_r(n)$ have \emph{type} $-$ and $+$, respectively.
We will also define two orders $<_r$ and $\prec_r$; $<_r$ will be a linear order on $F_r(n)$ and $\prec_r$ will be a linear order on equivalence classes under the equivalence relation $\equiv_r$.
In $<_r$, all elements of $F^-_r(n)$ will precede all elements of $F^+_r(n)$, and the bijection $\sigma_r$ will be order-reversing.
Moreover, if we regard $\prec_r$ as an ordering on $F_r^-(n)$ and on $F^+_r(n)$, we will have $(F^-_r(n),\prec_r) = (F^-_r(n),<_r)$, and hence $(F^+_r(n),\prec_r) = (F^+_r(n),>_r)$.
The color of an edge $e=\{A_1,\dots,A_r\}$ in~$c_r$, where $A_i \in F_r(n)$ and $A_1 <_r \cdots <_r A_r$, is then defined using an iterative application of a function $\gamma$ on consecutive vertices $A_i$ and $A_{i+1}$, where $\gamma(A,B)$ is the first element of $B$ in $\preceq_{r-1}$ on which $A$ and $B$ differ.
We apply $\gamma$ on $e$ until we reach a unique element of $F_1(n)$, which is set to be the color of~$e$.

Now, we proceed by describing the construction of $c_r$ in full detail.
Let $F_2^-(n) \colonequals \{(2n-i+1,i) \colon i \in [n]\} \subseteq [2n]^2$ and $F_2^+(n) \colonequals \{(i,2n-i+1) \colon i \in [n]\} \subseteq [2n]^2$.
We define a linear ordering $<_2$ on the disjoint union $F_2(n) \colonequals F_2^-(n) \cup F^+_2(n)$ by letting $(2n,1) <_2 (2n-1,2) <_2 \dots <_2 (1,2n)$\footnote{Alternatively, one might define $F_2(n)=[2n]$. However, we use this definition as it is more similar to the approach of Moshkovitz and Shapira.}.
Note that $N_2 \colonequals |F_2(n)|=2n$.
For convenience, we define $F^-_1(n) \colonequals \{-\}$, $F^+_1(n) \colonequals \{+\}$, $F_1(n) \colonequals \{-,+\}$, and $- <_1 +$.

Let $\sigma_2 \colon F_2^-(n) \to F_2^+(n)$ be the one-to-one correspondence that maps $(2n-i+1,i)$ to $(i,2n-i+1)$.
Two elements $A$ and $B$ from $F_2(n)$ are \emph{equivalent}, written $A \equiv_2 B$, if $A=B$, $A=\sigma_2(B)$, or $B=\sigma_2(A)$.
We order the equivalence classes of $F_2(n)$ under $\equiv_2$ by a linear order $\preceq_2$ by identifying each $A$ from $F_2^-(n)$ with $\sigma_2(A)$ and by letting $\prec_2$ be the ordering $<_2$ on $F_2^-(n)$.
Slightly abusing the notation, we sometimes consider $\preceq_2$ as a linear order on $F_2(n)$.
Then two equivalent elements of $F_2(n)$ are considered equal in $\preceq_2$.
For $r=1$, we let $\sigma_1(-)=+$ and $- \equiv_1 +$.

Let $r \ge 3$ be a positive integer and assume we have constructed $F_{r-1}(n)$.
Let $F_r(n)$ be the collection of sets $X$ such that $X$ contains exactly one set from each equivalence class of $\equiv_{r-1}$ on $F_{r-1}(n)$.
Observe that $N_r \colonequals |F_r(n)| = 2^{N_{r-1}/2}$ and that no two sets from $F_r(n)$ are comparable in $\subseteq$.
Also note that the minimum and the maximum element of $F_{r-1}(n)$ in $<_{r-1}$ are equivalent and thus $X$ contains exactly one of them.

We let $F^-_r(n)$ and $F^+_r(n)$ be the subsets of $F_r(n)$ consisting of sets that contain the minimum and the maximum element of $F_{r-1}(n)$ in $<_{r-1}$, respectively.
Since every element of $F_r(n)$ contains either the minimal or the maximal element of $F_{r-1}(n)$ in $<_{r-1}$, the sets $F^-_r(n)$ and $F^+_r(n)$ partition $F_r(n)$.
We say that sets from $F_r^-(n)$ and $F^+_r(n)$ have \emph{type} $-$ and $+$, respectively. 
An example for $r=3=n$ can be found in Figure~\ref{fig-F3}.

Let $A$ and $B$ be distinct sets from $F_r(n)$ for $r \geq 3$.
We let $\gamma(A,B)$ be the element from $B \cap E$, where $E$ is the first equivalence class of $(F_{r-1}(n))_{\equiv_{r-1}}$ in $\prec_{r-1}$ on which $A$ and $B$ differ.
We define the total order $<_r$ on $F_r(n)$ by letting $A<_rB$ if $\gamma(A,B) \in F_{r-1}^+(n)$.
Observe that $\gamma(A,B) \in F_{r-1}^+(n)$ if and only if $\gamma(B,A) \in F_{r-1}^-(n)$ and thus $<_r$ is indeed a total order.
For $r=2$, if $A=(a_1,a_2)$ and $B=(b_1,b_2)$ are distinct elements from $F_2(n)$, then we let $\gamma(A,B)=-$ if $a_1<b_1$ and, similarly, $\gamma(A,B)=+$ if $a_1>b_1$, where $<$ is the standard ordering of $\mathbb{R}$.
Note that, for $A,B \in F_2(n)$, $\gamma(A,B)=-$ if and only if $A >_2 B$.

We define the mapping $\sigma_r \colon F^-_r(n) \to F_r^+(n)$ by letting 
\[\sigma_r(\{A_1,\dots,A_{N_{r-1}/2}\}) \colonequals \{\sigma_{r-1}(A_1),\dots,\sigma_{r-1}(A_{N_{r-1}/2})\}.\]
Note that $\sigma_r$ is a one-to-one correspondence.
Two elements $A$ and $B$ from $F_r(n)$ are \emph{equivalent}, written $A \equiv_r B$, if $A=B$, $A = \sigma_r(B)$, or $B=\sigma_r(A)$.
We again order the equivalence classes of $F_r(n)$ under $\equiv_r$ by a linear order $\preceq_r$ that is obtained by identifying each $A$ from $F_r^-(n)$ with $\sigma_r(A)$ and by letting $\prec_r$ be the ordering $<_r$ on $F_r^-(n)$.
Again, slightly abusing the notation, we sometimes consider $\preceq_r$ as a linear order on the set $F_r(n)$.
Thus two equivalent elements from $F_r(n)$ are the same in $\preceq_r$, $(F_r^-(n),<_r)=(F_r^-(n),\prec_r)$, and $(F_r^+(n),>_r)=(F_r^+(n),\prec_r)$.

\begin{figure}[ht]
\centering
\includegraphics{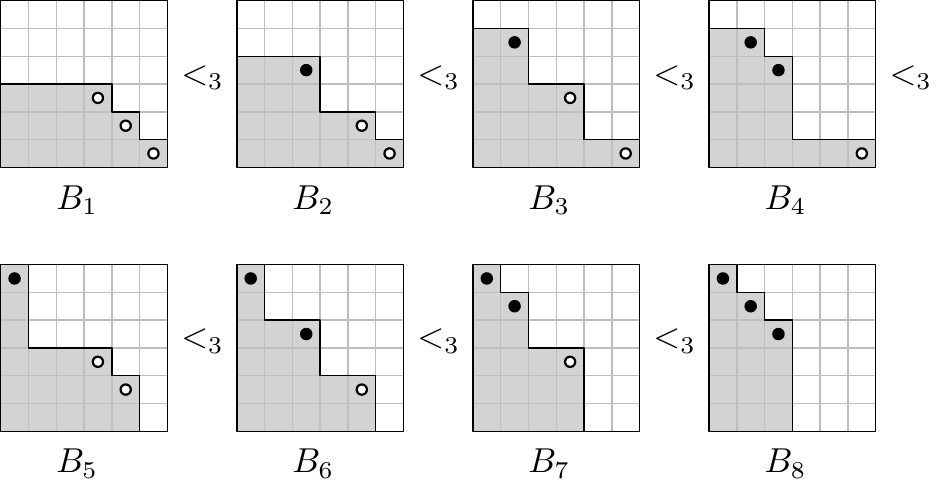}
\caption{Elements $B_1 <_3 \dots <_3 B_8$ of the set $F_3(3)$.
Each element $(i,j) \in \{1,\dots,6\}^2$ is represented by an entry on the $(7-i)$th row on the $j$th column of the corresponding matrix.
Thus, in particular, the elements of $F_2(3)$ form the diagonal.
The elements from $F_2^-(3)$ are denoted by empty circles, the elements from $F_2^+(3)$ by full circles.
The four sets on the first line have type $-$ and the four sets on the second line have type $+$. 
We have $B_i \equiv_3 B_{9-i}$ for every $i \in \{1,\dots,4\}$.
To illustrate the coloring $c_3$, we have $\gamma(B_1,B_2) = (3,4)$ and $\gamma(B_2,B_3) = (2,5)$, so $c_3(\{B_1,B_2,B_3\}) = +$.}
\label{fig-F3}
\end{figure}

For integers $k,r \geq 2$ and a sequence $(B_1,\dots,B_k)$ of sets from $F_r(n)$ in which any two consecutive terms are distinct, we use $\Gamma(B_1,\dots,B_k)$ to denote the sequence $(\gamma(B_1,B_2),\dots,\allowbreak\gamma(B_{k-1},B_k))$ of $k-1$ sets from $F_{r-1}(n)$.
Observe that, if $r \geq 3$, the definition of $\gamma$ guarantees that any two consecutive terms of $\Gamma(B_1,\dots,B_k)$ are distinct and thus we can apply the function $\Gamma$ on $F_{r-1}(n)$.
Applying $\Gamma$ to $(B_1,\dots,B_k)$ iteratively $i$ times, for some $i$ with $1 \leq i \leq \min\{k-1,r-1\}$, results in a sequence $\Gamma^i (B_1,\dots,B_k) \colonequals \Gamma(\Gamma(\cdots \Gamma(B_1,\dots,B_k) \cdots))$ of $k-i$ elements from $F_{r-i}(n)$.
For convenience, we set $\Gamma^0(B_1,\dots,B_k) \colonequals (B_1,\dots,B_k)$.

Letting $\mathcal{K}_{N_r}^r$ be the ordered complete $r$-uniform hypergraph with the vertex set $F_r(n)$ ordered by $<_r$, we color $\mathcal{K}^r_{N_r}$ with a 2-coloring $c_r$ by letting $c_r(\{A_1,\dots,A_r\}) \colonequals \Gamma^{r-1}(A_1,\dots,A_r)$ for every edge $\{A_1,\dots,A_r\}$ of $\mathcal{K}_{N_r}^r$ with $A_1 <_r \dots <_r A_r$.

\begin{lemma}
\label{lem-noLongPath}
For all positive integers $n$ and $r$ with $r \geq 3$, there is no monochromatic copy of $\mathcal{P}^r_{2n+r-1}$ in $\mathcal{K}_{N_r}^r$ colored with~$c_r$.
\end{lemma}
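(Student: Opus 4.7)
The plan is to reduce the assumption of a long monochromatic monotone $r$-path to an impossibly long strictly monotone sequence of integers in $[2n]$, obtained as the first coordinates of the $(r-2)$-fold iterate of $\Gamma$ applied to the path.

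Suppose for contradiction that $(B_1, \dots, B_k)$ is a monochromatic copy of $\mathcal{P}^r_k$ in $c_r$ with $k = 2n + r - 1$ and common color $\epsilon \in \{-,+\}$; thus $B_1 <_r \cdots <_r B_k$ and $c_r(\{B_i, \dots, B_{i+r-1}\}) = \epsilon$ for every $1 \leq i \leq k - r + 1$. A short induction unfolding the definition of $\Gamma^{r-1}$ shows that the $i$-th entry of the sequence $\Gamma^{r-1}(B_1, \dots, B_k)$ is exactly $c_r(\{B_i, \dots, B_{i+r-1}\})$, so all $k-r+1$ entries of $\Gamma^{r-1}(B_1, \dots, B_k)$ equal $\epsilon$.

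Next I would pass one level up and consider $(C_1, \dots, C_{k-r+2}) := \Gamma^{r-2}(B_1, \dots, B_k)$, a sequence of elements of $F_2(n)$. Iterating the paper's observation that $\Gamma$ applied to any sequence in $F_s(n)$ with $s \geq 3$ yields a sequence in $F_{s-1}(n)$ whose consecutive terms are distinct --- over $r-2$ successive applications of $\Gamma$, the last one taking inputs in $F_3(n)$ to outputs in $F_2(n)$ --- guarantees that consecutive $C_i$'s are distinct, so $\gamma(C_i, C_{i+1})$ is defined for each $i$. Since $\Gamma(C_1, \dots, C_{k-r+2}) = \Gamma^{r-1}(B_1, \dots, B_k) = (\epsilon, \dots, \epsilon)$, we conclude $\gamma(C_i, C_{i+1}) = \epsilon$ for every $1 \leq i \leq k - r + 1$.

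Writing $C_i = (a_i, b_i)$, the explicit definition of $\gamma$ on $F_2(n)$ gives $\gamma(C_i, C_{i+1}) = -$ iff $a_i < a_{i+1}$ and $\gamma(C_i, C_{i+1}) = +$ iff $a_i > a_{i+1}$, so the sequence $a_1, \dots, a_{k-r+2}$ is strictly monotone. Since $F_2(n) = F_2^-(n) \cup F_2^+(n)$ is in bijection with $[2n]$ via the first-coordinate map (as $F_2^+(n)$ contributes first coordinates $1, \dots, n$ and $F_2^-(n)$ contributes $n+1, \dots, 2n$), this forces $k - r + 2 \leq 2n$, i.e., $k \leq 2n + r - 2$, contradicting $k = 2n + r - 1$. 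The only bookkeeping is verifying the identification of entries of $\Gamma^{r-1}(B_1, \dots, B_k)$ with edge colors of the path and the iterated consecutive-distinctness of the $C_i$'s; both follow directly from the definitions, and I do not anticipate a significant obstacle.
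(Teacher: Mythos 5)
Your proof is correct and takes essentially the same route as the paper: both collapse the path via $\Gamma^{r-2}$ to a sequence in $F_2(n)$, observe that the edge colors of the path are exactly the $\gamma$-values of consecutive terms, and bound the length by the strict monotonicity of the first coordinates (which range bijectively over $[2n]$). The only cosmetic difference is that you apply $\Gamma^{r-2}$ to the whole vertex sequence at once, whereas the paper applies it to each window $(A_i,\dots,A_{i+r-2})$ separately; by the telescoping identity $\Gamma^j(B_1,\dots,B_k)_i = \Gamma^j(B_i,\dots,B_{i+j})$ these are the same thing.
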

\begin{proof}
Let $\mathcal{P}$ be a monochromatic copy of $\mathcal{P}^r_k$ in $c_r$ for some integer $k \geq r$.
Let $A_1 <_r \dots <_r A_k$ be vertices of~$\mathcal{P}$.
Let $a_1, \dots, a_{k-r+2}$ be the elements of $F_2(n)$ obtained by applying the function $\Gamma^{r-2}$ to sequences $(A_1,\dots,A_{r-1}),(A_2,\dots,A_r),\allowbreak\dots,(A_{k-r+2},\dots,A_k)$, respectively.
The color $c_r(\{A_i,\dots,A_{i+r-1}\})$ of each edge $\{A_i,\dots,A_{i+r-1}\}$ of $\mathcal{P}$ then equals $\gamma(a_i,a_{i+1})$.
Thus if all edges of $\mathcal{P}$ have color $-$ in~$c_r$, we obtain $a_1 >_2 \dots >_2 a_{k-r+2}$.
That is, the first coordinates of $a_1,\dots,a_{k-r+2}$ increase and we get $k \leq 2n+r-2$, as $a_1,\dots,a_{k-r+2} \in F_2(n) \subseteq [2n]^2$.
Similarly, if all edges of $\mathcal{P}$ have color $+$, then $a_1 <_2 \dots <_2 a_{k-r+2}$ and the second coordinates of $a_1,\dots,a_{k-r+2}$ increase, which again implies $k \leq 2n+r-2$.
\end{proof}

Note that if $r=3$, then $a_1,\dots,a_{k-1}$ all have type $+$, as $A_1<_r\cdots <_r A_k$.
Using this fact, we could eventually obtain a better estimate $\ORS(\mathcal{P}^3_{n+2}) \geq 2^n$. 
However, this is not optimal anyway, as we know that $\ORS(\mathcal{P}^3_n) = \binom{2n-4}{n-2}+1$.

It remains to show that the coloring $c_r$ satisfies the monotonicity pro\-perty.
In other words, we want to show that there is at most one change of a sign in $(c_r(S^{(r+1)}),\dots,c_r(S^{(1)}))$ for every sequence $S=(A_1,\dots,A_{r+1})$ of sets from $F_r(n)$ with $A_1 <_r \dots <_r A_{r+1}$.
We first prove two auxiliary results that hold for every $r \geq 2$.

\begin{lemma}
\label{lem-deletion}
For positive integers $n$ and $r$ with $r \geq 2$, let $(A,B,C)$ be a sequence of distinct sets from~$F_r(n)$.
For $r \geq 3$, $\gamma(A,C) = \min_{\prec_{r-1}}\{\gamma(A,B),\allowbreak\gamma(B,C)\}$ if $\gamma(A,B) \not\equiv_r \gamma(B,C)$ and $\gamma(A,B),\gamma(B,C) \prec_{r-1} \gamma(A,C)$ otherwise.
For $r=2$, $\gamma(A,C) \in \{\gamma(A,B),\gamma(B,C)\}$ if $\gamma(A,B) \neq \gamma(B,C)$ and $\gamma(A,C) =\gamma(A,B) = \gamma(B,C)$ otherwise.
\end{lemma}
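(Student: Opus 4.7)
My plan is to view each element of $F_r(n)$ (for $r\ge 3$) as a choice, for every equivalence class $E$ of $\equiv_{r-1}$ on $F_{r-1}(n)$, of one of the two representatives of $E$. For distinct $X,Y\in F_r(n)$ I write $E_{XY}$ for the $\prec_{r-1}$-smallest equivalence class on which the choices made by $X$ and $Y$ disagree, so that by definition $\gamma(X,Y)$ is the unique element of $Y\cap E_{XY}$. Then comparing the three values $\gamma(A,B),\gamma(B,C),\gamma(A,C)$ in $\prec_{r-1}$ is the same as comparing the three classes $E_{AB},E_{BC},E_{AC}$ in the order on equivalence classes. Two facts used throughout: (i) each equivalence class of $\equiv_{r-1}$ on $F_{r-1}(n)$ has exactly two elements, one from $F_{r-1}^-(n)$ and one from $F_{r-1}^+(n)$, which follows by induction from the disjointness of $F_{r-1}^-(n)$ and $F_{r-1}^+(n)$ together with $\sigma_{r-1}$ being a fixed-point-free bijection between them; (ii) for any $X,Y\in F_r(n)$ and any class $E$, the elements $X\cap E$ and $Y\cap E$ are automatically equivalent, so $\gamma(A,B)\equiv_{r-1}\gamma(B,C)$ is equivalent to the combinatorial statement $E_{AB}=E_{BC}$.

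For $r\ge 3$ I split on whether $E_{AB}=E_{BC}$. If $E_{AB}\ne E_{BC}$, by symmetry assume $E_{AB}\prec_{r-1}E_{BC}$. All pairs agree on classes strictly below $E_{AB}$; at $E_{AB}$ itself, $B$ and $C$ still agree while $A$ disagrees with $B$, so $A$ disagrees with $C$ at $E_{AB}$, forcing $E_{AC}=E_{AB}$. Since $B\cap E_{AB}=C\cap E_{AB}$, also $\gamma(A,C)=\gamma(A,B)$, which equals $\min_{\prec_{r-1}}\{\gamma(A,B),\gamma(B,C)\}$ because $E_{AB}\prec_{r-1}E_{BC}$. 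If instead $E_{AB}=E_{BC}=:E$, then fact (i) forces $A\cap E$ and $C\cap E$ to both be the representative of $E$ not chosen by $B$, so $A\cap E=C\cap E$; combined with agreement on all classes below $E$, we get $E_{AC}\succ_{r-1}E$, i.e.\ $\gamma(A,C)\succ_{r-1}\gamma(A,B)\equiv_{r-1}\gamma(B,C)$, as required.

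For $r=2$ the argument is arithmetic. The first coordinates of the elements of $F_2(n)=F_2^-(n)\cup F_2^+(n)$ run through all of $[2n]$ without repetition, so $\gamma(X,Y)=-$ or $+$ according as the first coordinate of $X$ is less than or greater than that of $Y$. A direct case analysis on the six linear orderings of the first coordinates of $A,B,C$ settles the claim: when $\gamma(A,B)=\gamma(B,C)$ the first coordinates are monotone, so $\gamma(A,C)$ shares their common value; otherwise one of $a_1,b_1,c_1$ lies between the other two and $\gamma(A,C)$ coincides with exactly one of $\gamma(A,B)$ and $\gamma(B,C)$.

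The main obstacle I anticipate is bookkeeping rather than a mathematical difficulty: one must distinguish the linear order $<_{r-1}$ on $F_{r-1}(n)$ from the induced order $\prec_{r-1}$ on equivalence classes, and distinguish literal equality $X\cap E=Y\cap E$ from the always-true relation $X\cap E\equiv_{r-1}Y\cap E$. Once these conventions are fixed, the argument is a short transitivity-style computation powered entirely by the size-two observation (i).
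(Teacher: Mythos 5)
Your proof is correct and takes essentially the same route as the paper's: a case split on whether $\gamma(A,B)$ and $\gamma(B,C)$ lie in the same equivalence class, followed by tracking agreement/disagreement of $A,B,C$ across the $\prec_{r-1}$-ordered classes (your use of fact (i) to get $A\cap E=C\cap E$ is the paper's observation $\gamma(B,A)=\gamma(B,C)$ in different clothing). The only presentational difference is that you fold the paper's two asymmetric subcases $\gamma(A,B)\prec_{r-1}\gamma(B,C)$ and $\gamma(B,C)\prec_{r-1}\gamma(A,B)$ into one by symmetry.
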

\begin{proof}
First, we assume $r \geq 3$.
One of the following three cases occurs: $\gamma(A,B) \prec_{r-1} \gamma(B,C)$, $\gamma(B,C) \prec_{r-1} \gamma(A,B)$, or $\gamma(A,B)\equiv_{r-1} \gamma(B,C)$.
In the first case, the sets in $B$ are the same as the sets in~$C$ up to $\gamma(B,C)$ in $\prec_{r-1}$ while the sets in $A$ and $B$ differ already on $\gamma(A,B) \prec_{r-1} \gamma(B,C)$ in~$\prec_{r-1}$.
Thus $\gamma(A,C)=\gamma(A,B)$.
Similarly, we obtain $\gamma(A,C)=\gamma(B,C)$ in the second case.

If $\gamma(A,B) \equiv_{r-1} \gamma(B,C)$, then it follows from $\gamma(A,B)\neq \gamma(B,C)$ that either $\gamma(A,B)=\sigma_r(\gamma(B,C))$ or $\sigma_r(\gamma(A,B))=\gamma(B,C)$.
In particular, $\gamma(B,A)=\gamma(B,C)$.
The sets in $A$ and $C$ thus differ for the first time on a set that is larger then both $\gamma(A,B)$ and $\gamma(B,C)$ in $\prec_{r-1}$.

For $r=2$, let $A=(a_1,a_2)$, $B=(b_1,b_2)$, and $C=(c_1,c_2)$.
If $\gamma(A,B)=+=\gamma(B,C)$, then $a_1>b_1$ and $b_1>c_1$.
In particular, $a_1>c_1$ and $\gamma(A,C)=+$.
Analogously, if $\gamma(A,B)=-=\gamma(B,C)$, then $\gamma(A,C)=-$.
If $\gamma(A,B) \neq \gamma(B,C)$, then $\gamma(A,C) \in \{\gamma(A,B),\gamma(B,C)\}$, as $F_1(n)$ contains only the values $-$ and $+$. 
\end{proof}

Note that if $A <_r B <_r C$ or $A >_r B >_r C$, then $\gamma(A,B)$ and $\gamma(B,C)$ have the same type and thus $\gamma(A,B) \not\equiv_{r-1} \gamma(B,C)$ if $r \geq 3$.
For $r \geq 3$, it follows from Lemma~\ref{lem-deletion} that if $\gamma(A,B) \equiv_{r-1} \gamma(B,C)$, then $\gamma(A,B) <_{r-1} \gamma(A,C) <_{r-1} \gamma(B,C)$ or $\gamma(A,B) >_{r-1} \gamma(A,C) >_{r-1} \gamma(B,C)$.
This is because the lemma gives us $\gamma(A,B),\gamma(B,C) \prec_{r-1} \gamma(A,C)$, which together with the facts $\gamma(A,B) \neq \gamma(B,C)$ and $\gamma(A,B) \equiv_{r-1} \gamma(B,C)$ implies that $\gamma(A,B)$ and $\gamma(B,C)$ have different types and thus $\gamma(A,C)$ lies between them in $<_{r-1}$.
For $r=2$, it follows that $(\gamma(A,B),\gamma(A,C),\gamma(B,C))$ has at most one change of a sign.
Thus, for any distinct $A,B,C$ from $F_r(n)$ with $r \geq 2$, the sequence $(\gamma(A,B),\gamma(A,C),\gamma(B,C))$ is monotone in $\leq_{r-1}$.

\begin{lemma}
\label{lem-replacement}
For positive integers $n$ and $r$ with $r \geq 2$, let $A,B,A',B'$ be sets from $F_r(n)$ such that $A \neq B$.
\begin{enumerate}[label=(\roman*)]
\item\label{item-lem-repl1}Assume $A' \neq B$. If $A \leq_r A'$, then $\gamma(A,B) \geq_{r-1} \gamma(A',B)$. 
\item\label{item-lem-repl2}Assume $A \neq B'$. If $B \leq_r B'$, then $\gamma(A,B) \leq_{r-1} \gamma(A,B')$.
\end{enumerate}
\end{lemma}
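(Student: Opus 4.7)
The plan is to reduce each part to a single application of Lemma~\ref{lem-deletion} on a carefully chosen triple, and then to translate the conclusions from $\prec_{r-1}$ into the full order $<_{r-1}$ by exploiting the decomposition $F_{r-1}(n) = F^-_{r-1}(n) \cup F^+_{r-1}(n)$ and the way the two orders relate on the two halves. Recall the key facts I will use repeatedly: all of $F^-_{r-1}(n)$ precedes all of $F^+_{r-1}(n)$ in $<_{r-1}$; the orders $<_{r-1}$ and $\prec_{r-1}$ agree on $F^-_{r-1}(n)$; and $\prec_{r-1}$ reverses $<_{r-1}$ on $F^+_{r-1}(n)$.

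For part (i), I would first dispose of the trivial case $A = A'$ and assume $A <_r A'$, so that by definition $\gamma(A,A') \in F^+_{r-1}(n)$. Because $A \neq B$, $A' \neq B$ and $A \neq A'$, the triple $(A, A', B)$ is pairwise distinct, and Lemma~\ref{lem-deletion} governs $\gamma(A,B)$ in terms of $\gamma(A,A')$ and $\gamma(A',B)$. Three sub-cases arise. (1) If $\gamma(A,A') \prec_{r-1} \gamma(A',B)$ and they are inequivalent, the lemma yields $\gamma(A,B) = \gamma(A,A') \in F^+_{r-1}(n)$ with $\gamma(A,B) \prec_{r-1} \gamma(A',B)$; splitting on whether $\gamma(A',B)$ lies in $F^-_{r-1}(n)$ or in $F^+_{r-1}(n)$ and applying the two facts above gives $\gamma(A,B) >_{r-1} \gamma(A',B)$. (2) If $\gamma(A',B) \prec_{r-1} \gamma(A,A')$, then $\gamma(A,B) = \gamma(A',B)$ and the conclusion holds with equality. (3) If $\gamma(A,A') \equiv_{r-1} \gamma(A',B)$ (and the two cannot literally coincide, since that would force $A'$ to contain two representatives of the same equivalence class), then $\gamma(A',B) = \sigma_{r-1}(\gamma(A,A')) \in F^-_{r-1}(n)$, and the discussion following Lemma~\ref{lem-deletion} places $\gamma(A,B)$ strictly between $\gamma(A,A')$ and $\gamma(A',B)$ in $<_{r-1}$, again giving $\gamma(A,B) >_{r-1} \gamma(A',B)$.

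Part (ii) is handled symmetrically: after reducing to $B <_r B'$, I would apply Lemma~\ref{lem-deletion} to the distinct triple $(A, B, B')$, use $\gamma(B, B') \in F^+_{r-1}(n)$, and perform the analogous three-case analysis to deduce $\gamma(A,B) \leq_{r-1} \gamma(A,B')$. For the base case $r = 2$, the same analysis goes through (with $F^-_1(n) = \{-\}$ and $F^+_1(n) = \{+\}$, where $\prec_1$ and $<_1$ trivially agree), but one may equivalently verify it by hand: writing $A, A', B$ as pairs in $[2n]^2$, the relation $A \leq_2 A'$ is equivalent to $a_1 \geq a'_1$, and $\gamma(\cdot, B)$ and $\gamma(A, \cdot)$ are monotone in the relevant first coordinate.

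The main obstacle I anticipate is exactly this bookkeeping: Lemma~\ref{lem-deletion} is phrased in $\prec_{r-1}$, while the claim to prove is in $<_{r-1}$, and these orders disagree on $F^+_{r-1}(n)$. What makes the translation work is precisely the sign information $\gamma(A,A') \in F^+_{r-1}(n)$ in part (i), and $\gamma(B,B') \in F^+_{r-1}(n)$ in part (ii); in every sub-case this pins down enough about the halves in which $\gamma(A,B)$ and $\gamma(A',B)$ (respectively $\gamma(A,B')$) sit to obtain the desired inequality.
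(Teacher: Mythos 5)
Your proof is correct, and it reaches the conclusion with the same tools (Lemma~\ref{lem-deletion} plus the decomposition of $F_{r-1}(n)$ into $F^-_{r-1}(n)$ and $F^+_{r-1}(n)$ with the reversal of $\prec_{r-1}$ on the $+$ half), but it organizes the case analysis differently from the paper. The paper splits on the position of $B$ in $<_r$ relative to $A$ and $A'$, giving three cases: for $A <_r A' <_r B$ and for $B <_r A <_r A'$ it applies Lemma~\ref{lem-deletion} to the $<_r$-increasing triple (where, by the remark after that lemma, the two $\gamma$'s share a type and so cannot be equivalent, forcing the cleaner $\min_{\prec_{r-1}}$ subcase), while for $A <_r B <_r A'$ it needs no deletion lemma at all, since $\gamma(A,B)\in F^+_{r-1}(n)$ and $\gamma(A',B)\in F^-_{r-1}(n)$ settles it by type alone. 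You instead apply Lemma~\ref{lem-deletion} once and for all to the triple $(A,A',B)$, and split on the internal trichotomy $\gamma(A,A')\prec_{r-1}\gamma(A',B)$, $\gamma(A',B)\prec_{r-1}\gamma(A,A')$, or $\gamma(A,A')\equiv_{r-1}\gamma(A',B)$; this means you must handle the equivalence subcase, which the paper's case split avoids, but you correctly invoke the sandwiching remark $\gamma(A,A') >_{r-1} \gamma(A,B) >_{r-1} \gamma(A',B)$ there and the sign bookkeeping closes it. Both routes are about the same length; the paper's saves the equivalence subcase, yours saves the external position split. One small inaccuracy worth flagging: your parenthetical reason for why $\gamma(A,A')$ and $\gamma(A',B)$ cannot literally coincide ("that would force $A'$ to contain two representatives of the same equivalence class") is not quite right --- the actual obstruction is that equality would mean $A'$ and $B$ share the same representative on the class where they are supposed to first differ. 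This is inessential, however, since the strict conclusion of Lemma~\ref{lem-deletion}'s equivalence case already presupposes $\gamma(A,A')\neq\gamma(A',B)$.
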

\begin{proof}
We prove only part~\ref{item-lem-repl1}, as the proof of part~\ref{item-lem-repl2} is analogous.
It is easy to verify the statement for $r=2$ and thus we consider $r \geq 3$.
We can assume $A \neq A'$, as otherwise the statement is trivial.
There are three possibilities where to place $B$ with respect to~$A$ and $A'$ in~$<_r$.
If $A <_r A' <_r B$, then Lemma~\ref{lem-deletion} implies $\gamma(A,B) \preceq_{r-1} \gamma(A',B)$ and, since $\gamma(A,B),\gamma(A',B) \in F_{r-1}^+(n)$, we have $\gamma(A,B) \geq_{r-1} \gamma(A',B)$.
If $A <_r B <_r A'$, then $\gamma(A,B) \in F_{r-1}^+(n)$ and $\gamma(A',B) \in F_{r-1}^-(n)$ and we obtain $\gamma(A,B) \geq_{r-1} \gamma(A',B)$ immediately.
Finally, if $B <_r A <_r A'$, then Lemma~\ref{lem-deletion} implies $\gamma(B,A') \preceq_{r-1} \gamma(B,A)$.
Since $\gamma(B,A)$ and $\gamma(A,B)$ are equivalent and have distinct type, and the same is true for $\gamma(B,A')$ and $\gamma(A',B)$, we have $\gamma(A',B) \preceq_{r-1} \gamma(A,B)$.
Using the fact that $\gamma(A,B),\gamma(A',B) \in F_{r-1}^-(n)$, we again obtain $\gamma(A,B) \geq_{r-1} \gamma(A',B)$.
\end{proof}

Before stating the last auxiliary result, we first introduce some definitions.
For two sequences $S_1$ and $S_2$, we use $S_1 \cdot S_2$ to denote the concatenation of $S_1$ and $S_2$.
A \emph{profile} is a sequence of symbols $\leq$, $\geq$, and $=$, containing at least one of the symbols $\leq$ and $\geq$.
Let $O_l \colonequals (\leq,=,\leq,=,\dots)$ and $E_l \colonequals (=,\geq,=\geq,\dots)$ be two profiles of length $l \in \mathbb{N}$.
We say that a profile $P$ of length $l$ is \emph{odd} or \emph{even} if it can be obtained from $O_l$ or $E_l$, respectively, by changing some occurrences of $\leq$ and $\geq$ to $=$.
For two profiles $P_1$ and $P_2$ such that each is odd or even, if $P_1$ is odd and $P_2$ is even, then $P_1$ and $P_2$ have \emph{distinct parity}.
Otherwise we say that $P_1$ and $P_2$ have the \emph{same parity}.
The \emph{opposite profile $\overline{P}$} of a profile $P$ is the profile that is obtained from $P$ by replacing each term $\leq$ with $\geq$ and each term $\geq$ with $\leq$. 

For positive integers $n$, $r$, and $s \geq 2$, let $R=(B_1,\dots,B_s)$ be a sequence of $s$ sets from~$F_r(n)$ and let $P$ be a profile of length $s-1$.
We say that $P$ is a \emph{profile of $R$} if whenever $B_j <_r B_{j+1}$ or $B_j >_r B_{j+1}$, then the $j$th term of $P$ is $\leq$ or $\geq$, respectively, for every $j \in [s-1]$. 

\begin{lemma}
\label{lem-Monotone}
For positive integers $n$, $r$, and $s$ with $r \geq 3$ and $3 \leq s \leq r+1$, let $S\colonequals(A_1,\dots,A_s)$ be a sequence of $s$ sets from $F_r(n)$ with $A_1 <_r \dots <_r A_s$.
Then the sequence $H\colonequals(\Gamma^{s-2}(S^{(s)}),\dots,\Gamma^{s-2}(S^{(1)}))$ has either odd or even profile.
\end{lemma}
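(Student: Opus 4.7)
The plan is to prove the lemma by strong induction on $s$, using the base case $s = 3$ and, in the inductive step, applying the hypothesis to the two length-$(s-1)$ subsequences $S' = (A_1, \ldots, A_{s-1})$ and $S'' = (A_2, \ldots, A_s)$.

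For the base case, when $s = 3$ the sequence $H$ equals $(\gamma(A_1, A_2), \gamma(A_1, A_3), \gamma(A_2, A_3))$. All three entries lie in $F_{r-1}^+(n)$ since $A_1 <_r A_2 <_r A_3$, and two distinct elements of $F_{r-1}^+(n)$ are never equivalent under $\equiv_{r-1}$; Lemma~\ref{lem-deletion} then gives $\gamma(A_1, A_3) = \min_{\prec_{r-1}}\{\gamma(A_1, A_2), \gamma(A_2, A_3)\}$, which is $\max_{<_{r-1}}\{\gamma(A_1, A_2), \gamma(A_2, A_3)\}$ because $\prec_{r-1}$ reverses $<_{r-1}$ on $F_{r-1}^+(n)$. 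Reading off the profile of $H$ yields either $(\leq, =)$ (odd) or $(=, \geq)$ (even), depending on which of $\gamma(A_1, A_2)$ and $\gamma(A_2, A_3)$ is larger.

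For the inductive step ($s \geq 4$), set $\alpha_j \colonequals \Gamma^{s-3}(A_1, \ldots, \widehat{A_j}, \ldots, A_{s-1})$ and $\beta_j \colonequals \Gamma^{s-3}(A_2, \ldots, \widehat{A_j}, \ldots, A_s)$; these all lie in $F_{r - s + 3}(n)$. The inductive hypothesis applied to $S'$ and $S''$ yields that $(\alpha_{s-1}, \ldots, \alpha_1)$ and $(\beta_s, \ldots, \beta_2)$ each have odd or even profile. Unfolding the recursion $\Gamma^{k}(B_1, \ldots, B_{k+1}) = \gamma(\Gamma^{k-1}(B_1, \ldots, B_k), \Gamma^{k-1}(B_2, \ldots, B_{k+1}))$ then produces the identities $H_s = \gamma(\alpha_{s-1}, \alpha_1)$, $H_1 = \gamma(\beta_s, \beta_2)$, and $H_j = \gamma(\alpha_j, \beta_j)$ for $2 \leq j \leq s-1$; the gluing identity $\alpha_1 = \beta_s = \Gamma^{s-3}(A_2, \ldots, A_{s-1})$ links the two subsequences.

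The heart of the proof is combining these identities with Lemma~\ref{lem-replacement} and Lemma~\ref{lem-deletion}. From Lemma~\ref{lem-replacement} (monotonicity of $\gamma$ in each argument), the inequalities among consecutive $\alpha_j$'s and consecutive $\beta_j$'s provided by the hypothesis transfer to a chain of comparisons between consecutive $H_j$'s; however, this chain alternates $\leq, \geq, \leq, \ldots$ and is not, in general, an odd or even profile. To upgrade it, I would apply Lemma~\ref{lem-deletion} to carefully chosen triples of the form $(\alpha_j, X, \beta_j)$, with $X$ taken from $\{\alpha_{j'}, \beta_{j'}\}$ sitting between $\alpha_j$ and $\beta_j$ in $<_{r-s+3}$; this expresses $H_j$ as the extremum (in $<_{r-s+2}$) of two neighboring $\gamma$-values, and the two possible realizations of that extremum correspond to the two possible parities of the resulting profile. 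The hard part will be this last step: tracking which consecutive inequalities collapse to equalities over the four parity combinations (odd/even $\alpha$ versus odd/even $\beta$), while handling the degenerate cases where consecutive $\alpha_j$'s or $\beta_j$'s coincide and Lemma~\ref{lem-deletion} is invoked in its ``otherwise'' branch.
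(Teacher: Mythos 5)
Your decomposition is exactly the paper's: your $(\alpha_{s-1},\ldots,\alpha_1)$ and $(\beta_s,\ldots,\beta_2)$ are the paper's $H_1$ and $H_2$, your gluing identity $\alpha_1=\beta_s$ appears there, and your base case $s=3$ is handled the same way. But there is a genuine gap precisely at the point you flag as ``the hard part,'' and it will not be closed by Lemma~\ref{lem-deletion} alone. Knowing only that $p(H_1)$ and $p(H_2)$ are each odd or even does not tell you \emph{where} in those profiles the non-equality symbols occur, and that positional information is exactly what decides whether the pointwise application of $\gamma$ produces an odd/even profile for $H$ or one with two sign changes. The paper resolves this by proving a \emph{strengthened} inductive statement, not merely the lemma as stated: writing $G_1$ and $G_2$ for the doubled sequences and $i_1(\cdot), i_2(\cdot)$ for the positions bracketing the non-$=$ block of a profile, it additionally proves that $i_1(G_1)\geq i_2(G_2)$ when $p(H_1)$ and $p(H_2)$ share parity (in which case $p(H)\in\{\overline{p(G_1)},p(G_2)\}$), and that $i_1(G_1)\geq i_1(G_2)$ and $i_2(G_1)\geq i_2(G_2)$ when they have opposite parity (in which case $p(H)$ is the merge $\overline{p(G_1)}\circ p(G_2)$). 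Establishing these position inequalities requires unrolling the recursion one level further (into $G_{1,1},G_{1,2},G_{2,1},G_{2,2}$) and a case analysis over the parities at that level.

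Concretely, here is why your plan as written stalls: with the unstrengthened hypothesis, it is consistent that the unique non-$=$ in $p(G_2)$ sits to the \emph{right} of the unique non-$=$ in $p(G_1)$. In the same-parity case that would make $H$ increase and then decrease (or vice versa), giving two sign changes; Lemma~\ref{lem-replacement} only transfers each comparison locally and Lemma~\ref{lem-deletion} only controls a single triple, so neither rules this out. You need the global constraint $i_1(G_1)\geq i_2(G_2)$ to know the ``left half'' of $H$ is governed entirely by $G_2$ and the ``right half'' entirely by $G_1$, which is what makes the profile collapse to $\overline{p(G_1)}$ or $p(G_2)$. So the missing idea is not a clever choice of triples for Lemma~\ref{lem-deletion}, but a strengthening of what you carry through the induction.
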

\begin{proof}
We recall that, for a sequence $S$ and a subset $\{i_1,\dots,i_k\}$ of $\{1,\dots,|S|\}$, we use $S^{(i_1,\dots,i_k)}$ to denote the subsequence of $S$ obtained by deleting all elements from~$S$ that are at position $i_j$ for some $j \in [k]$.
Also note that every sequence $(A_1,\dots,A_k)$ of elements from $F_r(n)$ satisfies $\Gamma^{k-1}(A_1,\dots,A_k) = \gamma(\Gamma^{k-2}(A_1,\dots,A_{k-1}),\Gamma^{k-2}(A_2,\dots,A_k))$. 
In particular, we have $\Gamma^{s-2}(S^{(s)})=\allowbreak\gamma(\Gamma^{s-3}(S^{(s-1,s)})$, $\Gamma^{s-3}(S^{(1,s)}))$, 
$\Gamma^{s-2}(S^{(1)})=\gamma(\Gamma^{s-3}(S^{(1,s)}),\allowbreak\Gamma^{s-3}(S^{(1,2)}))$, 
and $\Gamma^{s-2}(S^{(i)})=\gamma(\Gamma^{s-3}(S^{(i,s)}),\Gamma^{s-3}(S^{(i,1)}))$ for every $i$ with $2 \leq i \leq s-1$. 

We use $H_1$ to denote the sequence $(\Gamma^{s-3}(S^{(s-1,s)}),\dots,\Gamma^{s-3}(S^{(1,s)}))$ and $H_2$ to denote $(\Gamma^{s-3}(S^{(1,s)}),\dots,\Gamma^{s-3}(S^{(1,2)}))$.
Let $G_1 \colonequals (\Gamma^{s-3}(S^{(s-1,s)})) \cdot H_1$ and $G_2 \colonequals H_2 \cdot (\Gamma^{s-3}(S^{(1,2)}))$.
That is, $G_1$ is the sequence obtained from $H_1$ by doubling the first term and $G_2$ is the sequence obtained from $H_2$ by doubling the last term.
By the definition of the function~$\gamma$, for every $i \in [s]$, the $i$th term of $H$ equals $\gamma(X,Y)$, where $X$ is the $i$th term of~$G_1$ and $Y$ is the $i$th term of $G_2$.

We proceed by induction on $s \geq 3$ and, in each step of the induction, we construct a profile $p(H)$ such that $p(H)$ is a profile of $H$ and $p(H)$ is odd or even.
We start with the base case $s=3$.
We have $H=(\gamma(A_1,A_2),\gamma(A_1,A_3),\gamma(A_2,A_3))$, $H_1=(A_1,A_2)$, $G_1 = (A_1,A_1,A_2)$, $H_2=(A_2,A_3)$, and $G_2 = (A_2,A_3,A_3)$.
Since $A_1 <_r A_2 <_r A_3$, it follows from Lemma~\ref{lem-deletion} that 
$\gamma(A_1,A_2)=\gamma(A_1,A_3)>_{r-1}\gamma(A_2,A_3)$ if $\Gamma(S^{(3)})\prec\Gamma(S^{(1)})$ or
$\gamma(A_1,A_2)<_{r-1}\gamma(A_1,A_3)=\gamma(A_2,A_3)$ if $\Gamma(S^{(1)})\prec_{r-1}\Gamma(S^{(3)})$. 
We thus choose $p(H)$ to be the even profile $(=,\geq)$ or the odd profile $(\leq,=)$, respectively.
We also set $p(H_1) \colonequals (\leq)$, $p(H_2) \colonequals (\leq)$, $p(G_1) \colonequals (=,\leq)$, and $p(G_2) \colonequals (\leq,=)$.
Observe that if $\Gamma(S^{(3)})\prec_{r-1} \Gamma(S^{(1)})$, then $p(H)$ is the profile $\overline{p(G_1)}$ and if $\Gamma(S^{(1)}) \prec_{r-1} \Gamma(S^{(3)})$, then $p(H)$ is the profile $p(G_2)$.

Let $R$ be a sequence of length $k$ with the profile $p(R)$ assigned.
We recall that the length of $p(R)$ is $k-1$.
We let $i_1(R)$ be the largest $i \in [k]$ such that the first $i-1$ terms of $p(R)$ are all $=$.
Similarly, we let $i_2(R)$ be the smallest $j \in [k]$ such that the last $k-j$ terms of $p(R)$ are all $=$.
In other words, $i_1(R)$ is the smallest $i$ with $1 \leq i \leq k$ such that the $i$th term of $p(R)$ is not $=$, and $i_2(R)$ is the smallest $i$ with $1 \leq i \leq k$ such that for every $j$ with $i \leq j \leq k-1$, the $j$th term of $p(R)$ is $=$.
Note that $i_2(R) \geq i_1(R)+1$.
In the case $s=3$, it is easy to check that $p(H_1)$ and $p(H_2)$ have the same parity and $i_1(G_1) = i_2(G_2)$.

For the induction step, we assume that $s \geq 4$.
We first express each of the sequences $H_1$ and $H_2$ as a result of applying $\gamma$ to two sequences, similarly as we have expressed $H$ using $G_1$ and $G_2$.
Let $H_{1,1} \colonequals (\Gamma^{s-4}(S^{(s-2,s-1,s)}),\dots,\allowbreak\Gamma^{s-4}(S^{(1,s-1,s)}))$ and $H_{1,2} \colonequals  (\Gamma^{s-4}(S^{(1,s-1,s)}),\dots,\Gamma^{s-4}(S^{(1,2,s)}))$.
By setting $G_{1,1} \colonequals (\Gamma^{s-4}(S^{(s-2,s-1,s)})) \cdot H_{1,1}$ and $G_{1,2} \colonequals H_{1,2} \cdot (\Gamma^{s-4}(S^{(1,2,s)}))$, we obtain that the $i$th term of $H_1$ is $\gamma(X,Y)$, where $X$ and $Y$ are the $i$th terms of $G_{1,1}$ and $G_{1,2}$, respectively.
We similarly proceed with $H_2$ and we let $H_{2,1} \colonequals H_{1,2}$ and $H_{2,2} \colonequals  (\Gamma^{s-4}(S^{(1,2,s)}),\dots,\allowbreak\Gamma^{s-4}(S^{(1,2,3)}))$.
Setting $G_{2,1} \colonequals (\Gamma^{s-4}(S^{(1,s-1,s)})\cdot H_{2,1}$ and $G_{2,2} \colonequals  H_{2,2} \cdot (\Gamma^{s-4}(S^{(1,2,3)}))$, we get that the $i$th term of $H_2$ is $\gamma(X,Y)$, where $X$ and $Y$ are the $i$th terms of $G_{2,1}$ and $G_{2,2}$, respectively; see Figure~\ref{fig-example} for an example.

\begin{figure}[ht]
\centering
\includegraphics[scale=0.95]{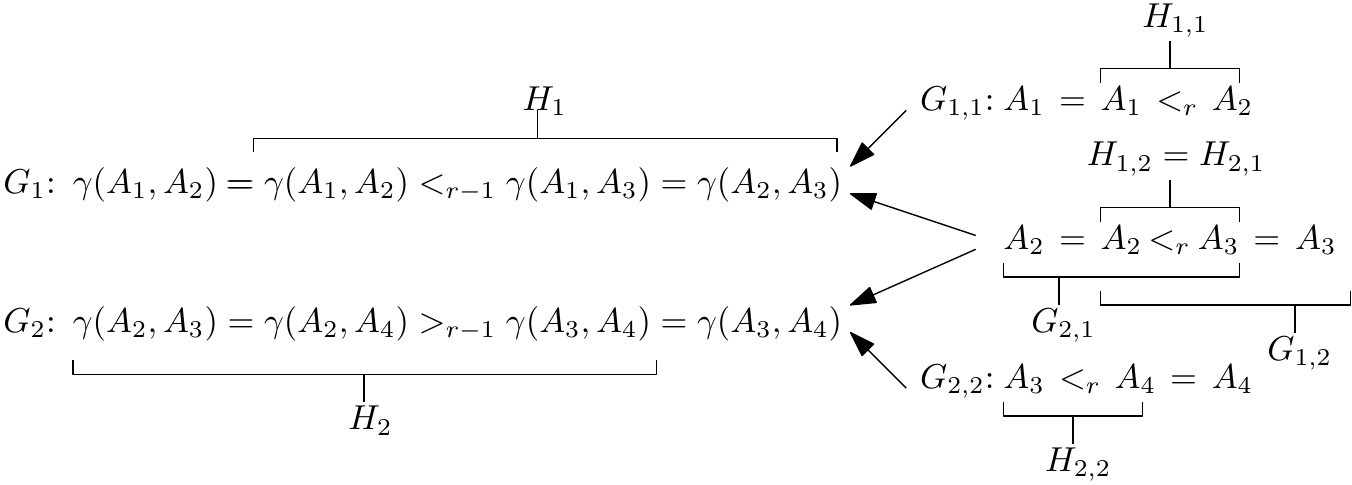}
\caption{Example of the sequences used in the induction step for $s=4$.
Here, we have profiles $p(G_1)=(=,\leq,=)$ and $p(G_2)=(=,\geq,=)$.
We set $p(H)=(=,\geq,=)$.}
\label{fig-example}
\end{figure}

We now define a profile $p(H)$ and, as our induction step, we later prove that it is a profile of~$H$.
In fact, we prove a stronger statement by additionally showing that if $p(H_1)$ and $p(H_2)$ have the same parity then either $p(H) = \overline{p(G_1)}$ or $p(H) = p(G_2)$ and also $i_1(G_1) \geq i_2(G_2)$, while if $p(H_1)$ and $p(H_2)$ have distinct parity then $i_1(G_1) \geq i_1(G_2)$ and $i_2(G_1) \geq i_2(G_2)$; see Figure~\ref{fig-profiles}.

\begin{figure}[ht]
\centering
\includegraphics{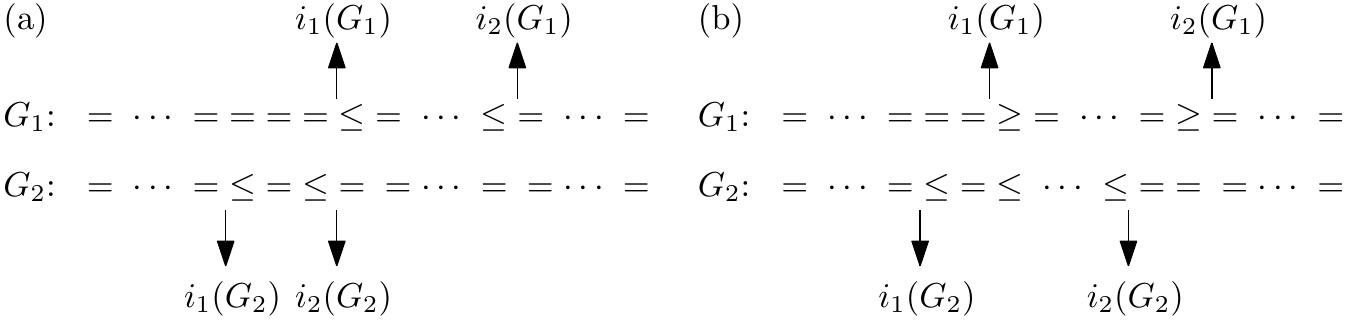}
\caption{Example of the inequalities $i_1(G_1) \geq i_2(G_2)$ in the case of the same parity of the profiles $p(H_1)$ and $p(H_2)$ (part~(a)) and $i_1(G_1) \geq i_1(G_2)$ and $i_2(G_1) \geq i_2(G_2)$ in the case when $p(H_1)$ and $p(H_2)$ have distinct parity (part~(b)).}
\label{fig-profiles}
\end{figure}

For every $j \in [s-1]$, we let the $j$th term of a profile $\overline{p(G_1)} \circ p(G_2)$ be $=$ if the $j$th terms of both $\overline{p(G_1)}$ and $p(G_2)$ are equalities and we let the $j$th term of $\overline{p(G_1)} \circ p(G_2)$ be $\leq$ if the $j$th term of $\overline{p(G_1)}$ or of $p(G_2)$ is $\leq$.
Similarly, we let the $j$th term of $\overline{p(G_1)} \circ p(G_2)$ be $\geq$ if the $j$th term of $\overline{p(G_1)}$ or of $p(G_2)$ is $\geq$.
Observe that if each of the profiles $p(H_1)$ and $p(H_2)$ is odd or even, then there is no $i$ with $1 \leq i \leq s-1$ such that the $i$th term of $\overline{p(G_1)}$ is $\leq$ while the $i$th term of $p(G_2)$ is $\geq$, or vice versa.
Thus $\overline{p(G_1)} \circ p(G_2)$ is correctly defined under this assumption.
If $p(H_1)$ and $p(H_2)$ have distinct parity, we let $p(H)$ be the profile $\overline{p(G_1)} \circ p(G_2)$.
If $p(H_1)$ and $p(H_2)$ have the same parity, we let $p(H)$ be the profile $\overline{p(G_1)}$ if $\Gamma^{s-2}(S^{(s)}) \prec_{r-s+2} \Gamma^{s-2}(S^{(1)})$ and the profile $p(G_2)$ if if $\Gamma^{s-2}(S^{(1)}) \prec_{r-s+2} \Gamma^{s-2}(S^{(s)})$.

Recall that, as our induction step, we prove that $p(H)$ is a profile of $H$ and that $i_1(G_1) \geq i_1(G_2)$ and $i_2(G_1) \geq i_2(G_2)$ if $p(H_1)$ and $p(H_2)$ have distinct parity and $i_1(G_1) \geq i_2(G_2)$ if $p(H_1)$ and $p(H_2)$ have the same parity.
We already observed that this statement is true for $s=3$.
Note that it follows from the induction hypothesis that the parity of $p(H)$ is the same as the parity of $\overline{p(G_1)}$ or of $p(G_2)$.
In particular, the profile $p(H)$ of $H$ is odd or even, which gives the statement of the lemma.

By the induction hypothesis, for every $i \in \{1,2\}$, the profile $p(H_i)$ is a profile of $H_i$ and $i_1(G_{i,1}) \geq i_1(G_{i,2})$ and $i_2(G_{i,1}) \geq i_2(G_{i,2})$ if $p(H_{i,1})$ and $p(H_{i,2})$ have distinct parity and $i_1(G_{i,1}) \geq i_2(G_{i,2})$ if $p(H_{i,1})$ and $p(H_{i,2})$ have the same parity.
In the latter case, we also know that $p(H_i)\in\{\overline{p(G_{i,1})}, p(G_{i,2})\}$.

Assume first that $p(H_1)$ and $p(H_2)$ have the same parity. 
We show that $i_1(G_1) \geq i_2(G_2)$ by distinguishing some cases.
First, we consider the case when both $p(H_1)$ and $p(H_2)$ are odd, the other one will be symmetric.
Using the definition of $G_{1,2}$ and $G_{2,1}$, the fact that $H_{1,2} = H_{2,1}$, and the fact that $p(H_{1,2}) = p(H_{2,1})$ contain at least one term which is not $=$, we obtain $i_j(G_{1,2}) = i_j(G_{2,1})-1$ for every $j \in \{1,2\}$.
\begin{enumerate}
\item We start with the cases when at least one of the following situations occurs, either $p(H_1) \notin \{\overline{p(G_{1,1})},p(G_{1,2})\}$ or $p(H_2) \notin \{\overline{p(G_{2,1})}, \allowbreak p(G_{2,2})\}$ .
Note that, by the definition of $p(H_i)$ for $i \in \{1,2\}$, if $p(H_i) \notin \{\overline{p(G_{i,1})},\allowbreak p(G_{i,2})\}$, then $p(H_i) = \overline{p(G_{i,1})} \circ p(G_{i,2})$ and the profiles $p(H_{i,1})$ and $p(H_{i,2})$ have distinct parity.

\begin{enumerate}
\item If $p(H_1) \notin \{\overline{p(G_{1,1})},p(G_{1,2})\}$ and $p(H_2) \in \{\overline{p(G_{2,1})}, \allowbreak p(G_{2,2})\}$, then $p(H_{1,1})$ is even and $p(H_{1,2})=p(H_{2,1})$ and $p(H_{2,2})$ are odd.
Since $p(H_{1,1})$ and $p(H_{1,2})$ have distinct parity, we get $i_1(G_{1,1}) \geq i_1(G_{1,2})$ and $i_2(G_{1,1}) \geq i_2(G_{1,2})$.
Since $p(H_{2,1})$ and $p(H_{2,2})$ have the same parity, we get $i_1(G_{2,1}) \geq i_2(G_{2,2})$.
From $p(H_1) = \overline{p(G_{1,1})} \circ p(G_{1,2})$ and $i_1(G_{1,1}) \geq i_1(G_{1,2})$, we get $i_1(G_1) = i_1(G_{1,2})+1$ by the definition of $G_1$.
Since $p(H_2)$ is odd, we have $p(H_2) = p(G_{2,2})$.
Thus $i_2(G_2) = i_2(G_{2,2})$.
Altogether, it follows from $i_1(G_{2,1}) \geq i_2(G_{2,2})$ and $i_1(G_{1,2}) = i_1(G_{2,1})-1$ that 
\[i_1(G_1) = i_1(G_{1,2})+1 = i_1(G_{2,1}) \geq i_2(G_{2,2})= i_2(G_2).\]

\item If $p(H_1) \in \{\overline{p(G_{1,1})},p(G_{1,2})\}$ and $p(H_2) \notin \{\overline{p(G_{2,1})}, \allowbreak p(G_{2,2})\}$, then $p(H_{1,1})$ and $p(H_{1,2})=p(H_{2,1})$ are even and $p(H_{2,2})$ is odd.
Since $p(H_{1,1})$ and $p(H_{1,2})$ have the same parity, we get $i_1(G_{1,1}) \geq i_2(G_{1,2})$.
Since $p(H_{2,1})$ and $p(H_{2,2})$ have distinct parity, we get $i_1(G_{2,1}) \geq i_1(G_{2,2})$ and $i_2(G_{2,1}) \geq i_2(G_{2,2})$.
From $p(H_2) = \overline{p(G_{2,1})} \circ p(G_{2,2})$ and $i_2(G_{2,1}) \geq i_2(G_{2,2})$, we get $i_2(G_2) = i_2(G_{2,1})$.
Since $p(H_1)$ is odd, we have $p(H_1) = \overline{p(G_{1,1})}$.
Thus $i_1(G_1) = i_1(G_{1,1})+1$ by the definition of $G_1$.
Altogether, it follows from $i_1(G_{1,1}) \geq i_2(G_{1,2})$ and $i_1(G_{1,2}) = i_1(G_{2,1})-1$ that 
\[i_1(G_1) = i_1(G_{1,1})+1 \geq i_2(G_{1,2}) + 1 = i_2(G_{2,1}) = i_2(G_2).\]

\item If $p(H_1) \notin \{\overline{p(G_{1,1})},p(G_{1,2})\}$ and $p(H_2) \notin \{\overline{p(G_{2,1})}, \allowbreak p(G_{2,2})\}$, then $p(H_{1,1})$ and $p(H_{1,2})$ have distinct parity and also $p(H_{2,1})$ and $p(H_{2,2})$ have distinct parity.
This, however, implies that either $p(H_1)$ or $p(H_2)$ is even, which is impossible.
\end{enumerate}

\item Thus now we are left with the cases $p(H_1) \in \{\overline{p(G_{1,1})},p(G_{1,2})\}$ and $p(H_2) \in \{\overline{p(G_{2,1})},\allowbreak p(G_{2,2})\}$.
We deal with all four cases.
\begin{enumerate}
\item If $p(H_1)=\overline{p(G_{1,1})}$ and $p(H_2)=p(G_{2,2})$, then $p(H_{1,1})$ is even and $p(H_{2,2})$ is odd and we have $i_1(G_1) = i_1(G_{1,1})+1$ and $i_2(G_2) = i_2(G_{2,2})$. 
If the parity of $p(H_{1,2})=p(H_{2,1})$ is odd, then $p(H_{1,1})$ and $p(H_{1,2})$ have distinct parity and $p(H_{2,1})$ and $p(H_{2,2})$ have the same parity.
It follows that $i_1(G_{1,1}) \geq i_1(G_{1,2})$ and $i_1(G_{2,1}) \geq i_2(G_{2,2})$.
Using $i_1(G_{1,2})=i_1(G_{2,1})-1$, we derive
\[i_1(G_1) = i_1(G_{1,1})+1 \geq i_1(G_{1,2})+1 = i_1(G_{2,1}) \geq i_2(G_{2,2}) = i_2(G_2).\]
If the parity of $p(H_{1,2})=p(H_{2,1})$ is even, then $p(H_{1,1})$ and $p(H_{1,2})$ have the same parity, while $p(H_{2,1})$ and $p(H_{2,2})$ have distinct parity.
This implies $i_1(G_{1,1}) \geq i_2(G_{1,2})$ and $i_2(G_{2,1}) \geq i_2(G_{2,2})$  and we derive 
\[i_1(G_1) = i_1(G_{1,1})+1 \geq i_2(G_{1,2})+1 = i_2(G_{2,1}) \geq i_2(G_{2,2}) = i_2(G_2).\]

\item Assume that $p(H_1)=\overline{p(G_{1,1})}$ and $p(H_2) = \overline{p(G_{2,1})}$.
Then $p(H_{1,1})$ and $p(H_{1,2})=p(H_{2,1})$ are both even. 
It also follows that $i_1(G_1) = i_1(G_{1,1})+1$ and $i_2(G_2) = i_2(G_{2,1})$.
Since the profiles $p(H_{1,1})$ and $p(H_{1,2})$ have the same parity, we have $i_1(G_{1,1}) \geq i_2(G_{1,2})$, which gives
\[i_1(G_1) = i_1(G_{1,1})+1 \geq i_2(G_{1,2})+1 = i_2(G_{2,1}) = i_2(G_2).\]

\item If $p(H_1)=p(G_{1,2})$ and $p(H_2) = p(G_{2,2})$, then both $p(H_{2,1})=p(H_{1,2})$ and $p(H_{2,2})$ are odd. 
We also have $i_1(G_1) = i_1(G_{1,2})+1$ and $i_2(G_2) = i_2(G_{2,2})$.
It follows that $p(H_{2,1})$ and $p(H_{2,2})$ have the same parity, which gives $i_1(G_{2,1}) \geq i_2(G_{2,2})$.
This implies
\[i_1(G_1) = i_1(G_{1,2})+1 =i_1(G_{2,1}) \geq i_2(G_{2,2}) = i_2(G_2).\]

\item We cannot have $p(H_1)=p(G_{1,2})$ and $p(H_2)=\overline{p(G_{2,1})}$, as otherwise $p(H_{1,2})$ and $p(H_{2,1})$ have distinct parity, which is impossible, as $p(H_{1,2})=p(H_{2,1})$. 
\end{enumerate}
\end{enumerate}

Altogether, if $p(H_1)$ and $p(H_2)$ are odd, we have $i_1(G_1) \geq i_2(G_2)$.
Note that in the above case analysis, we only rely on the facts that the parity of two profiles is the same or different, we do not use the actual parity.
Thus, by symmetry, the inequality $i_1(G_1) \geq i_2(G_2)$ holds if both $p(H_1)$ and $p(H_2)$ are even. 

Now, we use the fact $i_1(G_1) \geq i_2(G_2)$ to show that $p(H)$ is a profile of $H$ and $p(H)=\overline{p(G_1)}$ if $\Gamma^{s-2}(S^{(s)}) \prec_{r-s+2} \Gamma^{s-2}(S^{(1)})$ or $p(H)=p(G_2)$ if $\Gamma^{s-2}(S^{(1)}) \prec_{r-s+2} \Gamma^{s-2}(S^{(s)})$.
Recall that we assume that $p(H_1)$ and $p(H_2)$ have the same parity.
Thus $\Gamma^{s-3}(S^{(s-1,s)}) <_{r-s+3} \Gamma^{s-3}(S^{(1,s)}) <_{r-s+3} \Gamma^{s-3}(S^{(1,2)}))$ or $\Gamma^{s-3}(S^{(s-1,s)}) >_{r-s+3} \Gamma^{s-3}(S^{(1,s)}) >_{r-s+3} \Gamma^{s-3}(S^{(1,2)}))$.
This implies that the first term $\Gamma^{s-2}(S^{(s)}) = \gamma(\Gamma^{s-3}(S^{(s-1,s)}),\Gamma^{s-3}(S^{(1,s)}))$ of $H$ and the last term $\Gamma^{s-2}(S^{(1)}) = \gamma(\Gamma^{s-3}(S^{(1,s)}),\Gamma^{s-3}(S^{(1,2)}))$ of $H$ have the same type and, assuming $s \leq r$, they are not equivalent.
Thus we either have $\Gamma^{s-2}(S^{(s)}) \prec_{r-s+2} \Gamma^{s-2}(S^{(1)})$ or $\Gamma^{s-2}(S^{(1)}) \prec_{r-s+2} \Gamma^{s-2}(S^{(s)})$.
In the first case, Lemma~\ref{lem-deletion} with the parameters $A \colonequals \Gamma^{s-3}(S^{(s-1,s)}), B \colonequals \Gamma^{s-3}(S^{(1,s)}), C \colonequals \Gamma^{s-3}(S^{(1,2)})$ implies $\gamma(\Gamma^{s-3}(S^{(s-1,s)}),\Gamma^{s-3}(S^{(1,2)})) = \Gamma^{s-2}(S^{(s)})$ and in the second case, the lemma with the same parameters gives $\gamma(\Gamma^{s-3}(S^{(s-1,s)}),\allowbreak\Gamma^{s-3}(S^{(1,2)})) = \Gamma^{s-2}(S^{(1)})$.
For $s=r+1$, the terms of~$H$ lie in $F_1(n)$ and Lemma~\ref{lem-deletion} gives $\Gamma^{s-2}(S^{(1)})=\gamma(\Gamma^{s-3}(S^{(s-1,s)}),\Gamma^{s-3}(S^{(1,2)})) = \Gamma^{s-2}(S^{(s)})$ immediately.
 
We know that the term $\gamma(\Gamma^{s-3}(S^{(s-1,s)}),\allowbreak\Gamma^{s-3}(S^{(1,2)}))$ equals the first term $\Gamma^{s-2}(S^{(s)})$ of $H$ if $\Gamma^{s-2}(S^{(s)}) \prec_{r-s+2} \Gamma^{s-2}(S^{(1)})$ and to the last term $\Gamma^{s-2}(S^{(1)})$ of $H$ otherwise.
We assume without loss of generality that $\Gamma^{s-2}(S^{(s)}) \prec_{r-s+2} \Gamma^{s-2}(S^{(1)})$, as the other case is symmetric.
For $j=i_1(G_1)$, the inequality $i_1(G_1) \geq i_2(G_2)$ implies that the $j$th term of $H$ equals $\gamma(\Gamma^{s-3}(S^{(s-1,s)}),\Gamma^{s-3}(S^{(1,2)}))=\Gamma^{s-2}(S^{(s)})$.
For every $i$ with $i \leq j$, the $i$th term of $H$ is obtained by applying $\gamma$ to the first term of $G_1$ and the $i$th term of~$G_2$.
Since $G_2$ is either non-decreasing or non-increasing in $\leq_{r-s+3}$, Lemma~\ref{lem-replacement} implies that all the first $j$ terms of $H$ are monotone in $\leq_{r-s+2}$.
Thus, since the first term of $H$ and the $j$th term of $H$ are both equal to $\Gamma^{s-2}(S^{(s)})$, we get that all the first $j$ terms of $H$ are equal.
Since $j=i_1(G_1) \geq i_2(G_2)$, for every $i$ with $i > j$, the  $i$th term of $H$ is obtained by applying $\gamma$ to the $i$th term of $G_1$ and the last term of $G_2$.
Together with the previous fact, Lemma~\ref{lem-replacement} implies that $p(H)=\overline{p(G_1)}$ and it is a profile of $H$.

For the rest of the proof we assume that the profiles $p(H_1)$ and $p(H_2)$ have distinct parity. 
For $i \in [s-1]$, let $(p_i,q_i)$ be the pair consisting of the $i$th term $p_i$ of $p(G_1)$ and the $i$th term $q_i$ of $p(G_2)$.
It follows from the definition of $G_1$ and $G_2$ that $(p_i,q_i) \in \{(=,=),(\leq,=),(=,\geq),\allowbreak(\leq,\geq)\}$ if $p(H_1)$ is odd and $p(H_2)$ is even and that  $(p_i,q_i) \in \{(=,=),(\geq,=),(=,\leq),(\geq,\leq)\}$ if $p(H_1)$ is even and $p(H_2)$ is odd.
Thus, by Lemma~\ref{lem-replacement} and by the fact that the $i$th term of $H$ is obtained by applying $\gamma$ to the $i$th terms of $G_1$ and $G_2$ for each $i \in [s]$, the profile $p(H)$ is odd or even and it is a profile of $H$.
For example, in the case $(p_i,q_i) = (\leq,=)$, we apply part~(i) of Lemma~\ref{lem-replacement} with $A \colonequals i\text{th}$ term of $G_1$, $A' \colonequals (i+1)\text{st}$ term of $G_1$, and $B \colonequals i\text{th}$ term of $G_2$, which is also the $(i+1)\text{st}$ term of $G_2$.
Note that in the cases $(p_i,q_i) \in \{(\leq,\geq),(\geq,\leq)\}$ we apply Lemma~\ref{lem-replacement} twice.

It remains to show that $i_1(G_1) \geq i_1(G_2)$ and $i_2(G_1) \geq i_2(G_2)$.
Let $j \in \{1,2\}$.
Since $p(H_1) \in \{\overline{p(G_{1,1})},p(G_{1,2}),\overline{p(G_{1,1})} \circ p(G_{1,2})\}$, we have $i_j(H_1) \in \{i_j(G_{1,1}),i_j(G_{1,2})\}$.
Similarly, $p(H_2) \in \{\overline{p(G_{2,1})},p(G_{2,2}),\overline{p(G_{2,1})} \circ p(G_{2,2})\}$ and thus $i_j(H_2) \in \{i_j(G_{2,1}),i_j(G_{2,2})\}$.
Since $p(H_{1,2})=p(H_{2,1})$, it follows from the definition of $G_{1,2}$ and $G_{2,1}$ that $i_j(G_{1,2})+1=i_j(G_{2,1})$.
We recall that $i_2(G_{k,l}) \geq i_1(G_{k,l})$ for all $k,l \in \{1,2\}$.
Thus the induction hypothesis gives $i_j(G_{1,1}) \geq i_j(G_{1,2})$ and $i_j(G_{2,1}) \geq i_j(G_{2,2})$\footnote{Here, we are considering both parity cases, namely distinct or same parity of $p(H_{i,1})$ and $p(H_{i,2})$, at the same time.}.
Altogether, we obtain $i_j(H_1) \geq i_j(G_{2,1})-1$ and $i_j(H_2) \leq i_j(G_{2,1})$.
It follows from the definition of $G_1$ and $G_2$ that $i_j(G_1) = i_j(H_1)+1$ and $i_j(G_2) = i_j(H_2)$.
This implies $i_j(G_1) \geq i_j(G_2)$.
\end{proof}

Lemma~\ref{lem-Monotone} is sufficient to guarantee the monotonicity property for $c_r$.

\begin{corollary}
\label{cor-monotonicityProperty}
For every integer $r$ with $r \geq 3$, the coloring $c_r$ is $r$-monotone.
\end{corollary}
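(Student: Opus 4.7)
The plan is to deduce the corollary essentially as an immediate translation of Lemma~\ref{lem-Monotone} applied at the top level $s=r+1$, with only a small observation needed to convert ``odd or even profile'' into ``at most one sign change''.

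First, I would fix any sequence $S=(A_1,\dots,A_{r+1})$ of vertices of $\mathcal{K}^r_{N_r}$ with $A_1 <_r \cdots <_r A_{r+1}$ and note that every subsequence $S^{(i)}$ is an $r$-tuple still ordered by $<_r$; hence by the definition of $c_r$ we have $c_r(S^{(i)}) = \Gamma^{r-1}(S^{(i)})$ for each $i\in[r+1]$. In particular, the sequence $(c_r(S^{(r+1)}),\dots,c_r(S^{(1)}))$ is exactly the sequence $H$ produced by Lemma~\ref{lem-Monotone} in the case $s=r+1$, which takes values in $F_1(n)=\{-,+\}$.

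Next I would apply Lemma~\ref{lem-Monotone} with $s=r+1$ (which is permitted since $r\geq 3$ gives $3\leq s\leq r+1$) to conclude that this $H$ admits an odd or even profile $p(H)$. By the definition of odd/even profiles, $p(H)$ is obtained from $O_r=(\leq,=,\leq,=,\dots)$ or $E_r=(=,\geq,=,\geq,\dots)$ by replacing some non-equality entries with $=$; in particular, $p(H)$ contains no $\geq$ in the odd case and no $\leq$ in the even case. Since $p(H)$ is a profile of $H$, consecutive terms of $H$ satisfy the corresponding relation in $\leq_1$, so $H$ is non-decreasing in the odd case and non-increasing in the even case.

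Finally, any monotone $\{-,+\}$-valued sequence has at most one change of sign, which is precisely the monotonicity property required of $c_r$. There is no real obstacle here: all the combinatorial work is already carried out inside the proof of Lemma~\ref{lem-Monotone}, and the corollary amounts to observing (i) that the coloring $c_r$ applied to the deletion sequence $S^{(r+1)},\dots,S^{(1)}$ produces precisely the sequence $H$ of the lemma at $s=r+1$, and (ii) that odd/even profiles on a two-valued sequence forbid more than one sign change.
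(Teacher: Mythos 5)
Your proposal is correct and matches the paper's own proof: both invoke Lemma~\ref{lem-Monotone} with $s=r+1$, identify $(c_r(S^{(r+1)}),\dots,c_r(S^{(1)}))$ with the sequence $H=(\Gamma^{r-1}(S^{(r+1)}),\dots,\Gamma^{r-1}(S^{(1)}))$, and observe that an odd or even profile forces $H$ to be monotone in $\leq_1$, hence to have at most one sign change. The only difference is that you spell out the (easy) step that odd profiles contain only $\leq$ and $=$ while even profiles contain only $\geq$ and $=$, which the paper leaves implicit.
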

\begin{proof}
For a sequence $S\colonequals(A_1,\dots,A_{r+1})$ of sets from $F_r(n)$ with $A_1 <_r \dots <_r A_{r+1}$, we show that there is at most one change of a sign in the sequence $(c_r(S^{(r+1)}),\dots,\allowbreak c_r(S^{(1)}))$.
By Lemma~\ref{lem-Monotone} applied for $s \colonequals r+1$, the sequence $(\Gamma^{r-1}(S^{(r+1)}),\dots,\allowbreak\Gamma^{r-1}(S^{(1)}))$  has odd or even profile and, in particular, this sequence is monotone in $\leq_1$.
The rest follows from the fact that $c_r(S^{(i)})=\Gamma^{r-1}(S^{(i)})$ for every $i \in [r+1]$.
\end{proof}

Lemma~\ref{lem-noLongPath} and Corollary~\ref{cor-monotonicityProperty} together give the statement of Theorem~\ref{thm-lowerBound}.

\paragraph{Comparison with the construction by Moshkovitz and Shapira}
For positive integers $n$ and $r \geq 3$, Mosh\-kovitz and Shapira~\cite{moshSha14} constructed colorings $c'_r$ of $\mathcal{K}^r_N$ with $N \geq \tow_{r-1}(\Omega(n))$ such that there is no monochromatic copy of $\mathcal{P}^r_n$ in $c'_r$.
However, for $r \geq 4$, their coloring $c'_r$ is not transitive.
The construction of our coloring $c_r$ is inspired by their approach and uses similar ideas.
However, there are some differences.
First of all, the coloring $c'_r$ is defined on a larger vertex set formed by \emph{line partitions of order $r$}, ordered by the lexicographic order $\lessdot_r$, while the vertex set on which $c_r$ is defined can be regarded as a proper subset of the vertex set for $c'_r$.
Second, the function $\gamma$ in the definition of $c_r$ differs from a function $\delta(A,B)$ that is used in the definition of $c'_r$ and that returns the smallest element of $B \setminus A$ in $\lessdot_{r-1}$.
Our function $\gamma$ is defined very similarly, but it uses the ordering $\prec_{r-1}$ instead.

\section{Proof of Theorem~\ref{thm-signotopesCount}}
\label{sec-signotopeCount}

In this section, we prove Theorem~\ref{thm-signotopesCount} by showing that the number of $r$-monotone colorings of~$\mathcal{K}^r_n$ is of order $2^{n^{r-1}/r^{\Theta(r)}}$ for $r \geq 3$ and $n \geq r$.
We first derive the lower bound in Subsection~\ref{subsec-countingLowerBound} and then, in Subsection~\ref{subsec-countingUpperBound}, we prove the upper bound.

\subsection{A lower bound on the number of monotone colorings}
\label{subsec-countingLowerBound}

Here we provide a lower bound $2^{n^{r-1}/r^{O(r)}}$ on the number of $r$-monotone colorings of $\mathcal{K}^r_n$ with $r \geq 3$ and $n \geq r$.
The construction is inspired by the method used by Matou\v{s}ek~\cite{mat02} to show that there are $2^{\Omega(n^2)}$ simple arrangements of $n$ pseudolines. 

First, we introduce some definitions.
A \emph{composition} of a positive integer $m$ into $k$ parts, $k \in \mathbb{N}$, is an ordered $k$-tuple $(p_1,\dots,p_k)$ of positive integers with $p_1+\dots+p_k = m$.
It is well-known and easy to show that the number of compositions of $m$ into $k$ parts is exactly $\binom{m-1}{k-1}$.
In particular, the total number of compositions of $m$ is $\sum_{i=1}^m \binom{m-1}{i-1} = 2^{m-1}$.

Let $r$ and $k$ be integers with $r \geq 3$ and $1 \leq k \leq r$.
Let $\sigma=(p_1,\dots,p_k)$ be a composition of~$r$ into $k$ parts.
The \emph{reduction step} on $\sigma$ maps $\sigma$ to the composition $(p_1,\dots,p_k-1)$ if $p_k > 1$ or to the composition $(p_1, \dots, p_{k-1})$ if $p_k = 1$.
We say that a composition $\sigma'$ is the \emph{reduction} of $\sigma$ if $\sigma'$ is a composition of one of the forms $(1,\dots,1,2)$ or $(p,1)$, for some $p>1$, and is obtained from $\sigma$ by a sequence of reduction steps.
Note that $\sigma$ has a reduction if and only if $\sigma \neq (1,\dots,1)$ and $\sigma \neq (r)$.
Moreover, the reduction, if it exists, is unique.

We now recursively define the sign of a composition $\sigma$ of $r$ using the sign of its reduction.
This is carried out by induction on $r$.
If $r=3$, then $\sigma$ is \emph{negative} if $\sigma=(1,2)$ and $\sigma$ is \emph{positive} if $\sigma = (2,1)$.
For $r>3$, we say that $\sigma$ is \emph{negative} if it satisfies one of the following three conditions: the reduction of $\sigma$ is negative, $\sigma=(1,\dots,1,2)$ and $r$ is odd, or $\sigma=(r-1,1)$ and $r$ is even. 
Similarly, we say that $\sigma$ is \emph{positive} if it satisfies one of the following three conditions: the reduction of $\sigma$ is positive, $\sigma=(1,\dots,1,2)$ and $r$ is even, or $\sigma=(r-1,1)$ and $r$ is odd. 
The notion of negative and positive integer compositions is illustrated in Figure~\ref{fig-compositions}.
Note that, for every $r \geq 3$, the only two compositions of $r$ that are not negative nor positive are $(1,\dots,1)$ and $(r)$.

\begin{figure}[ht]
\centering
\includegraphics{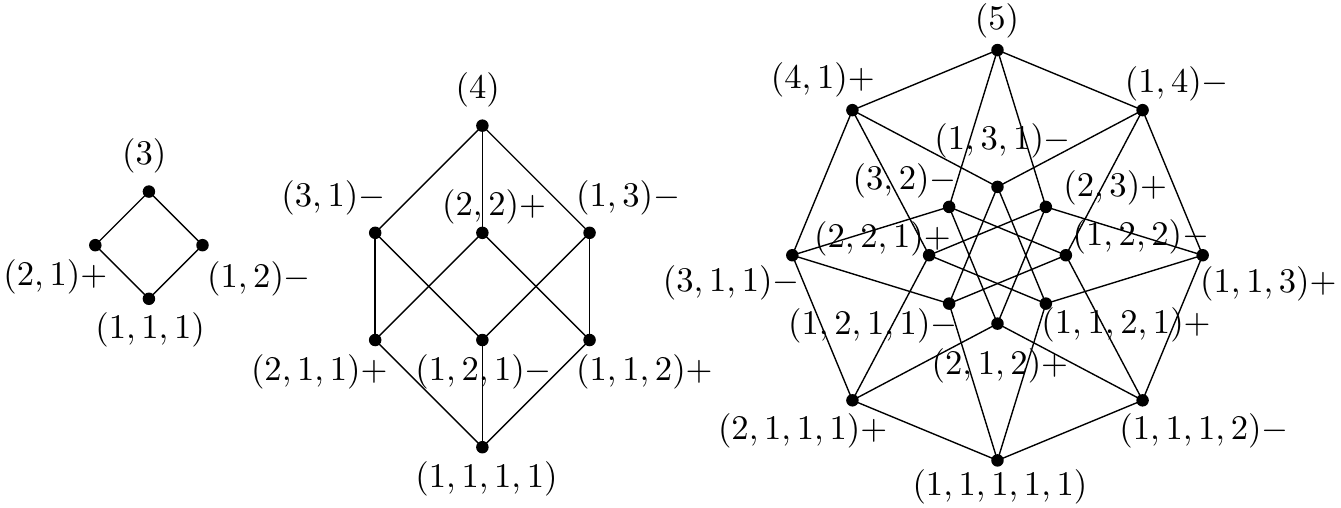}
\caption{Examples of negative and positive compositions of $r\in \{3,4,5\}$.
The compositions of $r$ are illustrated as elements of the partially ordered set~$(2^{[r-1]},\subseteq)$ and the sign $-$ or $+$ next to a composition $\sigma$ denotes whether $\sigma$ is negative or positive, respectively.}
\label{fig-compositions}
\end{figure}

Let $r$ and $h$ be positive integers with $r \geq 3$.
We set $n \colonequals r^h$ and $m \colonequals n/r = r^{h-1}$.
We now present a construction of a 3-coloring $c_{r,h}$ of $\mathcal{K}^r_n$ with colors $\{-,0,+\}$ such that every 2-coloring that is obtained by replacing each occurrence of the color $0$ with either $-$ or $+$ is $r$-monotone.
The construction is carried out recursively starting with the case $h=1$, in which $n=r$ and $c_{r,1}$ is the coloring that assigns the color $0$ to the only edge $[r]$ of~$\mathcal{K}^r_n$.

For $h \geq 2$, we let $V_i \colonequals \{(i-1)m+1,\dots,im\}$ for every $i \in [r]$ and we let $[n]$ be the vertex set of $\mathcal{K}_n^r$.
Note that the sets $V_1,\dots,V_r$ partition $[n]$ and form consecutive intervals of size $m$ in the ordering $<$ on $[n]$. 

We define the 3-coloring $c_{r,h}$ of $\mathcal{K}^r_n$ on $[n]$ as follows.
Let $e=\{v_1,\dots,v_r\} \in \binom{[n]}{r}$ be an edge of $\mathcal{K}^r_n$.
The sets $V_1,\dots,V_r$ partition $e$ into nonempty sets $e_1,\dots,e_k$, for some $k \in [r]$, that are consecutive in $<$.
We let $p_i$ be the size of $e_i$ for every $i \in [k]$ and we use $\sigma$ to denote the composition $(p_1,\dots,p_k)$ of $r$.
We choose $c_{r,h}(e) \colonequals -$ if $\sigma$ is negative and $c_{r,h}(e)\colonequals+$ if $\sigma$ is positive.
It remains to assign the color $c_{r,h}(e)$ to edges $e$ for which $\sigma$ is not negative nor positive, that is, to edges $e$ for which either $\sigma = (r)$ or $\sigma=(1,\dots,1)$.
If $\sigma=(r)$, then $e \subseteq V_i$ for some $i \in [r]$ and, in particular, $\{v_1-(i-1)m,\dots,v_r-(i-1)m\} \subseteq [m]$.
We then use the coloring $c_{r,h-1}$ from the previous step of the construction and we let $c_{r,h}(e) \colonequals c_{r,h-1}(\{v_1-(i-1)m,\dots,v_r-(i-1)m\})$.
If $\sigma = (1,\dots,1)$, then each $v_i$ lies in the set $V_i$.
In this case, we use $v'_i$ to denote the integer $v_i - (i-1)m$ from $[m]$ and we let
\[
c_{r,h}(e) \colonequals
\begin{cases}
- & \text{if } \sum\limits_{\substack{i \in [r]\\ i\text{ even}}}v'_i < \sum\limits_{\substack{i \in [r]\\ i\text{ odd}}}v'_i,\\
0 & \text{if } \sum\limits_{\substack{i \in [r]\\ i\text{ even}}}v'_i  = \sum\limits_{\substack{i \in [r]\\ i\text{ odd}}}v'_i,\\
+ & \text{if } \sum\limits_{\substack{i \in [r]\\ i\text{ even}}}v'_i >  \sum\limits_{\substack{i \in [r]\\ i\text{ odd}}}v'_i.
\end{cases}
\]
This finishes the construction of $c_{r,h}$.
We show that no matter how we replace zeros with $-$ or $+$ signs in $c_{r,h}$, the resulting coloring is $r$-monotone.

\begin{lemma}
\label{lem-countSignotope}
For $h \geq 1$ and $r \geq 3$, let $c$ be an arbitrary 2-coloring of $\mathcal{K}^r_n$ that is obtained from $c_{r,h}$ by replacing  each occurrence of $0$ with $-$ or $+$.
Then $c$ is an $r$-monotone coloring of~$\mathcal{K}^r_n$.
\end{lemma}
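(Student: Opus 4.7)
The plan is to proceed by induction on $h \geq 1$. The base case $h=1$ is immediate since $\mathcal{K}^r_r$ has a single edge and the monotonicity condition is vacuous. For the induction step, fix an $(r+1)$-tuple $S=\{v_1<\dots<v_{r+1}\}$ of vertices of $\mathcal{K}^r_n$ and let $\sigma=(q_1,\dots,q_l)$ be the composition of $r+1$ recording the sizes of the nonempty intersections of $S$ with the blocks $V_1,\dots,V_r$, read from left to right. If $l=1$, then $S$ lies in a single block $V_i$, and each color $c(S^{(j)})$ equals $c_{r,h-1}$ evaluated on the translate of $S^{(j)}$ to $[m]$; the monotonicity of $(c(S^{(r+1)}),\dots,c(S^{(1)}))$ then follows from the inductive hypothesis on $c_{r,h-1}$ applied to the translate of $S$.

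Assume $l\geq 2$. For each $k\in[l]$, all $r$-subsets obtained by deleting a vertex of $S$ that lies in the $k$th nonempty block share a common composition $\sigma_k$ of $r$, obtained from $\sigma$ by reducing the $k$th part. Hence the sequence $(c(S^{(r+1)}),\dots,c(S^{(1)}))$ splits into $l$ consecutive runs of lengths $q_l,q_{l-1},\dots,q_1$, and within the $k$th run the colors are those assigned by $c_{r,h}$ to the $r$-subsets with composition $\sigma_k$. I will then split into three mutually exclusive sub-cases, which together exhaust all $\sigma$ with $l \geq 2$. If some $\sigma_k=(r)$, then $\sigma\in\{(r,1),(1,r)\}$; here one run has length one and receives a wildcard color coming recursively from $c_{r,h-1}$, while the other run has length $r$ and is constant with the fixed sign of $(r-1,1)$, so any filling of the single zero entry gives at most one sign change. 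If some $\sigma_k=(1,\dots,1)$, then $l=r$ and $\sigma$ has a unique part equal to~$2$ at some position $i_0$ with the remaining parts equal to~$1$; a short induction on $r$ shows that for $k'\neq k$ the composition $\sigma_{k'}$, which has the form $(1,\dots,1,2,1,\dots,1)$ with $r-1$ parts, has sign depending only on the parity of the position of the~$2$, taking opposite values for $k'<k$ and for $k'>k$. The two middle entries, both coming from the sum formula, form a monotone pair in the direction consistent with the surrounding runs, since they correspond to the two choices of which vertex of $V_{i_0}$ to delete and the sum formula is monotone in the variable $v'_{i_0}$.

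The main remaining sub-case is generic: no $\sigma_k$ equals $(r)$ or $(1,\dots,1)$. Every $\sigma_k$ then has a defined sign, the color sequence is made of $l$ constant runs, and monotonicity reduces to the combinatorial claim that $(\mathrm{sign}(\sigma_l),\dots,\mathrm{sign}(\sigma_1))$ has at most one sign change. I plan to prove this by induction on $r$ via the recursive definition of sign. Translating reduction steps into divider sets makes the comparison of $\sigma_1,\dots,\sigma_l$ tractable: a reduction step either leaves the divider set unchanged in a shrunk universe or removes its maximum element, so the reductions of $\sigma_1,\dots,\sigma_l$ terminate in base forms whose types and terminal sums can be tracked in parallel and whose signs can then be ordered. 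The main obstacle I foresee is making this parallel tracking uniform across all shapes of $\sigma$, especially when consecutive $\sigma_k$ and $\sigma_{k+1}$ reach base forms at different terminal sums or of different types; this is where the most careful bookkeeping will be needed.
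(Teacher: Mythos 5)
Your overall strategy matches the paper's: induction on $h$, with the case $\sigma=(r+1)$ dispatched by the inductive hypothesis, the case $\sigma=(1,\dots,1,2,1,\dots,1)$ treated separately via the sum formula, and a remaining ``generic'' regime where the colors come from the signs of compositions. Your extra sub-case (a) (isolating $\sigma\in\{(r,1),(1,r)\}$) is a valid and slightly cleaner decomposition; the paper absorbs those into its general case by noting that $\sigma_{r'}$ is the single entry that may have no defined sign, while in your splitting the generic case has \emph{all} $\sigma_k$ with defined signs, which is tidier. Sub-case (b) is also essentially the paper's argument, including the observation that the two ``middle'' colors are ordered by the sum formula (and note that, because the two middle edges differ in exactly one coordinate, they cannot both be $0$, which is what makes the wildcard replacement harmless; you should spell that out).

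The genuine gap is the generic sub-case, which is where almost all of the work lies. You correctly isolate the claim that $\bigl(\mathrm{sign}(\sigma_1),\dots,\mathrm{sign}(\sigma_l)\bigr)$ has at most one sign change, and you propose to prove it by induction on $r$ using divider sets, but you never carry this out and you explicitly flag that ``making this parallel tracking uniform across all shapes of $\sigma$'' is an unresolved obstacle. This is not a small cleanup: there is no obvious inductive step on $r$, because decreasing a single part of $\sigma$ does not interact simply with the reduction chain, and consecutive $\sigma_k$'s can reach terminal forms of different types at different lengths, exactly the difficulty you name. The paper resolves this \emph{without} a nested induction on $r$: it observes that the reduction of any $\sigma_k$ (with $k$ ranging over the non-degenerate parts) is determined directly by the reduction $\sigma'$ of $\sigma$, and then does a short case split on whether $\sigma'$ has the form $(r'-1,1)$ or $(1,\dots,1,2)$, and on whether $r'>3$ or $r'=3$; in each sub-case the signs of the $\sigma_k$ form a sequence that changes at most once, by inspection. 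Until you supply an argument of comparable concreteness for the generic case, the proof is incomplete.
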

\begin{proof}
We prove the statement by induction on $h$.
For $h=1$, the statement is trivial as $n=r$ and there is only a single edge in $\mathcal{K}^r_r$.

Now, assume that $h \geq 2$.
We further assume that the statement is true for $h-1$.
Let $F = \{v_1,\dots,v_{r+1}\} \subseteq [n]$ be an $(r+1)$-tuple of vertices of $\mathcal{K}^r_n$ with $v_1 < \dots < v_{r+1}$ and let $j_1<\dots<j_k$ be indices with $F \cap V_{j_i} \neq\emptyset$.
We let $\sigma=(p_1,\dots,p_k)$, $k \in [r]$, be the composition of $r+1$, where $p_i=|F\cap V_{j_i}|$ for every $i \in [k]$.
For every $i \in [r+1]$, we let $e_i$ be the edge $F \setminus \{v_i\}$.
Similarly as before, for every $i \in [r+1]$, the partitioning of each edge $e_i$ by $V_1,\dots,V_r$ determines a composition $\sigma_i$ of $r$.
Note that each $\sigma_i$ can be obtained from $\sigma$ by decreasing $p_j$ by 1 if $p_j>1$ or by removing $p_j$ if $p_j=1$, where $j$ is a number from~$[k]$ such that $\sum_{l=1}^{j-1} p_l < i$ and $\sum_{l=1}^j p_l \geq i$.

We show that $c$ is $r$-monotone  by proving that there is at most one change of a sign in the sequence $S_F\colonequals (c(e_1),\dots,c(e_{r+1}))$.
Since there are only $r$ sets $V_1,\dots,V_r$ in the partition of $[n]$, we cannot have $\sigma = (1,\dots,1)$.
If $\sigma = (r+1)$, then $F \subseteq V_i$ for some $i \in [r]$ and the statement follows from the induction hypothesis for $h-1$.
Thus we can assume that $\sigma$ is positive or negative.

We first deal with the case $\sigma = (1,\dots,1,2,1,\dots,1)$, that is, $p_j=2$ for some $j \in [r]$ and $p_i=1$ for every $i \in [r]\setminus \{j\}$.
For such a $\sigma$, we have $\sigma_j=(1,\dots,1)=\sigma_{j+1}$, every $\sigma_i$ with $i > j+1$ has the $j$th coordinate $2$ and all other $1$, and $\sigma_i$ with $i < j$ has the $(j-1)$st coordinate $2$ and all other $1$.
We show that if $\sigma_i$ has the value $2$ on an odd coordinate, then $c_{r,h}(e_i)=+$.
This is because we can perform reduction steps until we reach the reduction $(1,\dots,1,2)$ of $\sigma_i$.
This reduction has an odd number of parts, which implies that it is a composition of an even number and thus the reduction of $\sigma_i$ is positive.
By the definition of $c_{r,h}$, we obtain $c_{r,h}(e_i)=+$.
Similarly, if the value $2$ is on an even coordinate of $\sigma_i$, then $c_{r,h}(e_i)=-$.
Altogether, we see that there are $\xi,\xi',\xi'' \in \{-,+\}$ such that $S_F=(\xi,\dots,\xi,\xi',\xi'',-\xi,\dots,-\xi)$, where $\xi'$ and $\xi''$ are on the $j$th and the $(j+1)$st coordinate, respectively.
Moreover, $\xi = +$ if $j$ is even and $\xi = -$ if $j$ is odd.
Since $v_j, v_{j+1} \in V_{i_j} = V_j$ and $v_j < v_{j+1}$, we have $v'_j < v'_{j+1}$.
Moreover, since $e_j=F \setminus \{v_j\}$, $e_{j+1}=F \setminus \{v_{j+1}\}$, the definition of $c_{r,h}$ implies that $\xi' \leq \xi''$ if $j$ is odd and $\xi' \geq \xi''$ if $j$ is even and  either $c_{r,h}(e_j)$  or $c_{r,h}(e_{j+1})$ is not $0$.
Thus there is at most one change of a sign in $S_F$.

In the rest of the proof, we assume that $\sigma$ is a negative or a positive composition of $r+1$ that is not of the form $(1,\dots,1,2,1,\dots,1)$.
Let $\sigma'$ be the reduction of $\sigma$.
We know that $\sigma'$ is a composition of some integer $r'$ with $3 \leq r' \leq r+1$ and $\sigma'=(r'-1,1)$ or $\sigma'=(1,\dots,1,2)$.

First, we consider the case where $\sigma'$ is of the form $(r'-1,1)$.
For every $i \in [k]$ with $i > r'$, the composition $\sigma_i$ has the same reduction as $\sigma$ and thus all the edges $e_i$ with $i > r'$ have the same color $\xi \in \{-,+\}$ in $c_{r,h}$.
Assume that $r'>3$.
Then every composition $\sigma_i$ with $i < r'$ has the reduction $(r'-2,1)$ and thus every edge $e_i$ with $i < r'$ has the color $-\xi$ in $c_{r,h}$.
It follows that $c$ is $r$-monotone, as $S_F=(-\xi,\dots,-\xi,\xi',\xi,\dots,\xi)$ for some $\xi' \in \{-,+\}$.
Now, assume $r' = 3$. 
Since $\sigma \neq (2,1,\dots,1)$, there is an entry in $\sigma$ of size larger than $1$ not lying on the first position and thus $\sigma_1$ and $\sigma_2$ have the same reduction of the form $(1,\dots,1,2)$.
Since $r+1 \geq 4$ and $r'=3$, there is at least one entry in $\sigma_3$ besides the first entry $r'-1=2$ and thus $\sigma_3$ has the same reduction $(r'-1,1)=(2,1)$ as any $\sigma_i$ with $i > r'=3$.
It follows that $S_F=(\xi',\xi',\xi,\dots,\xi)$ for some $\xi' \in \{-,+\}$.

Now, we consider the case $\sigma' = (1,\dots,1,2)$.
The composition $\sigma'$ is the reduction of $\sigma_i$ for every $i \in [k]$ with $i > r'$ and thus all the edges $e_i$ with $i > r'$ have the same color $\xi \in \{-,+\}$ in~$c_{r,h}$.
Since $\sigma \neq (1,\dots,1,2,1,\dots,1)$, the compositions $\sigma_{r'-1}$ and $\sigma_{r'}$ have the same reduction.
Assume $r' > 3$.
Then every $\sigma_i$ with $i \leq r'-2$ has the reduction $(1,\dots,1,2)$, which is a composition of $r'-1$.
Consequently, for every $i \leq r'-2$, the edge $e_i$ has color $-\xi$ in $c_{r,h}$.
Thus $S_F=(-\xi,\dots,-\xi,\xi',\xi',\xi,\dots,\xi)$ for some $\xi' \in \{-,+\}$.
If $r' = 3$, then $\sigma' = (1,2)$ and the reduction of $\sigma_1$ is $(p_2,1)$.
If $p_2 \geq 3$, then the compositions $\sigma_2,\dots,\sigma_{r+1}$ have the same reduction and $S_F=(\xi',\xi,\dots,\xi)$ for some $\xi' \in \{-,+\}$.
If $p_2=2$, then the reduction of $\sigma_1$ is $(2,1)$ and, since $(2,1)$ is positive and $(1,2)$ is negative, we obtain $S_F=(+,\xi',\xi',-,\dots,-)$ for some $\xi' \in \{-,+\}$.
In any case, there is at most one change of a sign in $S_F$ and $c$ is $r$-monotone. 
\end{proof}

By Lemma~\ref{lem-countSignotope}, every coloring obtained from $c_{r,h}$ is $r$-monotone.
Thus, to finish the proof of the lower bound in Theorem~\ref{thm-signotopesCount}, it suffices to estimate the number of such colorings from~below.

\begin{lemma}
\label{lem-countEstimate}
For positive integers $h$ and $r$ with $r \geq 3$, there are at least
\[2^{r^{(r-1)(h-1)-2r}}\]
colorings that can be obtained from $c_{r,h}$ by replacing each occurrence of $0$ with $-$ or~$+$.
\end{lemma}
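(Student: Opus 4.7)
The plan is to observe that by Lemma~\ref{lem-countSignotope}, every way of replacing the $0$'s in $c_{r,h}$ by a choice of $\pm$ yields an $r$-monotone coloring, and different replacements clearly give different colorings. Hence the number of colorings in question is exactly $2^{Z(r,h)}$, where $Z(r,h)$ denotes the number of edges of~$\mathcal{K}^r_{r^h}$ receiving color $0$ under $c_{r,h}$, and it suffices to prove the lower bound $Z(r,h) \geq r^{(r-1)(h-1)-2r}$.

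Reading off the construction of $c_{r,h}$ for $h \geq 2$, exactly two cases contribute color $0$: the recursive case $\sigma = (r)$, in which $e \subseteq V_i$ for some $i \in [r]$ and $c_{r,h}(e) = c_{r,h-1}(\cdot)$, contributing $r \cdot Z(r,h-1)$ zeros in total; and the diagonal case $\sigma = (1,\dots,1)$, in which $e$ meets each $V_i$ exactly once and $c_{r,h}(e)=0$ iff the even- and odd-indexed coordinates have equal sums. Setting
\[
  N(r, m) := \Bigl|\Bigl\{(v'_1,\dots,v'_r) \in [m]^r : \sum_{i \text{ even}} v'_i = \sum_{i \text{ odd}} v'_i\Bigr\}\Bigr|,
\]
I obtain the recurrence $Z(r,h) = r \cdot Z(r, h-1) + N(r, r^{h-1})$, with base case $Z(r,1) = 1$. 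In particular, $Z(r,h) \geq N(r, r^{h-1})$, and the lemma reduces to the anti-concentration estimate $N(r, m) \geq m^{r-1}/r^{2r}$.

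To establish this, I would regard $(v'_1,\dots,v'_r)$ as uniform on $[m]^r$ and write $N(r,m) = m^r \cdot \Pr[D = 0]$ with $D := S_{\text{even}} - S_{\text{odd}}$, whose support lies in an interval of at most $rm$ integers. For $r$ even, $S_{\text{even}}$ and $S_{\text{odd}}$ are i.i.d.\ with common law $p$, and Cauchy--Schwarz directly yields $\Pr[D = 0] = \sum_j p_j^2 \geq 1/|\mathrm{supp}(p)| \geq 2/(rm)$, hence $N(r,m) \geq 2m^{r-1}/r$. For $r$ odd I would bootstrap to the even case by conditioning on $v'_r$: the remaining $r-1$ variables give a signed sum with symmetric, unimodal distribution around $0$, and averaging over $v'_r \in [m]$ together with the even-case Cauchy--Schwarz bound still produces $N(r,m) \geq m^{r-1}/r^{O(1)}$, which is comfortably stronger than the $m^{r-1}/r^{2r}$ needed. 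Combining with the recurrence gives $Z(r,h) \geq N(r, r^{h-1}) \geq r^{(h-1)(r-1)-2r}$, finishing the proof. The main obstacle is the odd-$r$ anti-concentration bound, because the clean identity $\Pr[D=0] = \sum_j p_j^2$ no longer applies when the two sides of the equation involve different numbers of i.i.d.\ summands; the generous $r^{2r}$ slack in the target, however, means one can afford a rather crude argument, so the difficulty is more conceptual than technical.
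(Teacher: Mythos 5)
Your approach is correct but genuinely different from the paper's. Both proofs reduce to the same combinatorial core: counting the ``diagonal'' zero-edges $e$ with exactly one vertex per block $V_i$, i.e.\ bounding $N(r,m):=|\{(v'_1,\dots,v'_r)\in[m]^r : \sum_{i \text{ even}} v'_i = \sum_{i \text{ odd}} v'_i\}|$ from below by $m^{r-1}/r^{2r}$ with $m=r^{h-1}$ (the paper discards the recursive contribution just as you do). Where you diverge is in how $N(r,m)$ is bounded. The paper gives an explicit construction: it takes each $(r-1)$-tuple $T\in[\lceil m/2\rceil]^{r-1}$ with not-all-equal entries, finds a balanced partition of $[r-1]$ into parts of sizes $\lceil(r-1)/2\rceil$ and $\lfloor(r-1)/2\rfloor$ whose signed sum $d$ is minimal positive, proves $d\leq m$ by a swap argument, appends $d$, and interleaves the two halves to produce a zero-edge; an $r!$ overcount factor then gives $N(r,m)\geq(\lceil m/2\rceil^{r-1}-\lceil m/2\rceil)/r!$. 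This handles both parities of $r$ in one stroke and needs no probabilistic tools. Your route is anti-concentration: write $N(r,m)=m^r\Pr[D=0]$ and use Cauchy--Schwarz. For even $r$ this is immediate and in fact gives a cleaner constant, $N(r,m)\geq 2m^{r-1}/r$. For odd $r$ your sketch is the weaker part: the phrase ``averaging over $v'_r$ together with the even-case Cauchy--Schwarz bound'' does not by itself close the gap, because Cauchy--Schwarz gives a \emph{lower} bound on $\Pr[D'=0]$, whereas what you actually need is a lower bound on $\Pr[D'\in[1,m]]$. A working version is: let $X,Y$ be i.i.d.\ sums of $(r-1)/2$ uniforms on $[m]$, so $N(r,m)=m^{r-1}\Pr[X-Y\in[1,m]]$; note $\Pr[X=Y]\leq\max_j q_j\leq 1/m$ so $\Pr[X-Y\geq 1]\geq\tfrac12(1-1/m)\geq 1/4$; use log-concavity of discrete uniform under convolution to get that $X-Y$ is symmetric unimodal; then block the positive support (of size at most $(r-1)(m-1)/2$) into intervals of length $m$, the first of which has the largest mass, giving $\Pr[X-Y\in[1,m]]\geq\tfrac{1}{4}\cdot\tfrac{2}{r-1}$. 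That yields $N(r,m)\geq m^{r-1}/(2(r-1))$, far more than needed. So the conclusion stands, but your odd-case step requires the unimodality-plus-blocking argument to be spelled out rather than the Cauchy--Schwarz-plus-averaging phrasing; the paper's explicit construction avoids this entirely, at the cost of being less transparent about where the $m^{r-1}$ is coming from.
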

\begin{proof}
Let $f_r(h)$ be the number of 2-colorings that can be obtained from~$c_{r,h}$ by replacing  each occurrence of color $0$ with either $-$ or $+$.
Clearly, we have $f_r(1)=2$.
For $h \geq 2$, we have $f_r(h) \geq 2^x$, where $x$ is the number of edges of color $0$ in $c_{r,h}$ that are not contained in any $V_i$.
We recall that $m = r^{h-1} \geq r$.

We estimate the number $x$ as follows.
Consider an arbitrary $(r-1)$-tuple $T=(t_1,\dots,t_{r-1})$ of numbers from $[\lceil m/2 \rceil]$ such that not all terms of $T$ are equal.
Clearly, there are $\lceil m/2\rceil^{r-1}-\lceil m/2\rceil$ such $(r-1)$-tuples.
Let $I$ and $J$ be two sets of sizes $\lceil (r-1)/2 \rceil$ and $\lfloor (r-1)/2 \rfloor$, respectively, whose union is a partition of $[r-1]$ such that $d\colonequals\sum_{i \in I} t_i - \sum_{j \in J} t_j$ is minimum and positive.
Such a partition exists, as not all terms of $T$ are equal and $|I| \geq |J|$.
We claim that $d \leq m$.

Suppose for contradiction that $d > m$.
Let $t_a$ be the largest element from $(t_i \colon i \in I)$ and let $t_b$ be the smallest element from $(t_j \colon j \in J)$.
Note that $t_a > t_b$, as $d > m$ and every element from $(t_i \colon i \in I)$ is at most $\lceil m/2 \rceil \leq m$.
Let $I'\colonequals (I \setminus \{a\}) \cup \{b\}$ and $J'\colonequals (J \setminus \{b\}) \cup \{a\}$.
The value $\sum_{i \in I'} t_i - \sum_{j \in J'} t_j$ decreases by $2(t_a-t_b)$ when compared to $\sum_{i \in I} t_i - \sum_{j \in J} t_j$.
Since $1 \leq 2(t_a - t_b) \leq 2\lceil m/2\rceil -2 \leq m$, we have $0 <\sum_{i \in I'} t_i - \sum_{j \in J'} t_j<d$, which contradicts the choice of $I$ and~$J$.

We let $E \colonequals (t_i \colon i \in I)$ and $O$ be the sequence that is obtained from $(t_j \colon j \in J)$ by adding the element $d \in [m]$.
Then $|O|=\lfloor(r-1)/2 \rfloor +1 = \lceil r/2 \rceil$, $|E|=\lceil (r-1)/2\rceil = \lfloor r/2\rfloor$, and $\sum_{o \in O}t_o = \sum_{e \in E}t_e$.
We choose $v'_{2i-1}$ to be the $i$th element of~$O$ for every $i \in [\lceil r/2\rceil]$ and $v'_{2j}$ to be the $j$th element of~$E$ for every $j \in [\lfloor r/2\rfloor]$.
Then we set $v_i \colonequals v'_i + (i-1)m \in V_i$ for each $i \in [r]$ and obtain $c_{r,h}(\{v_1,\dots,v_r\})=0$.
One sequence $(v'_1,\dots,v'_r)$ is obtained from at most $r!$ $(r-1)$-tuples $T$ with $d$ added.
Thus $x \geq (\lceil m/2\rceil ^{r-1}-\lceil m/2\rceil)/r!$.
Altogether, we have an estimate $f_r(h) \geq 2^{((m/2)^{r-1}-\lceil m/2 \rceil)/r!}$, which is at least $2^{r^{(r-1)(h-1)-{2r}}}$, as $m=r^{h-1}$.
\end{proof}

If $n=r^h$, then the bound from Lemma~\ref{lem-countEstimate} gives the lower bound $2^{n^{r-1}/r^{3r}}$ on the number of $r$-monotone colorings of $\mathcal{K}^r_n$.
For $n$ that is not a power of $r$, we have $r^{h-1} < n < r^h$ for some $h \in \mathbb{N}$ and we can use the estimate $2^{n^{r-1}/r^{4r}}$.

\subsection{An upper bound on the number of monotone colorings}
\label{subsec-countingUpperBound}

Here, using a result of Felsner and Valtr~\cite{felVal11}, we show that, for integers $r \ge 3$ and $n \geq r$, the number of $r$-monotone colorings of $\mathcal{K}^r_n$ is at most $2^{2^{r-2}n^{r-1}/(r-1)!}$.

We proceed by induction on $r$.
For $r=3$, Felsner and Valtr~\cite{felVal11} showed that the number of sign functions of simple arrangements of $n$ pseudolines is at most $2^{0.657n^2} \leq 2^{n^2}$.
By Theorem~\ref{thm-interHyperpl}, sign functions of simple arrangements of~$n$ pseudolines correspond to $3$-monotone colorings of $\mathcal{K}^3_n$ and thus the number of such monotone colorings is also at most $2^{n^2}$.
This constitutes the base case.

For the induction step, we assume $r \geq 4$.
Let $c$ be an $r$-monotone coloring of $\mathcal{K}^r_n$ with vertex set $[n]$.
For $i \in \{r,\dots,n\}$, the \emph{$i$th projection of $\mathcal{K}^r_n$} is the function $p_i$ that maps an edge $\{v_1,\dots,v_{r-1},i\}$ of  $\mathcal{K}^r_n$ with $v_1 < \dots < v_{r-1} < i$ to $\{v_1,\dots,v_{r-1}\}$.
The image of $\mathcal{K}^r_n$ via~$p_i$ is the ordered complete $(r-1)$-uniform hypergraph $\mathcal{K}^{r-1}_{i-1}$.
Note that for every edge $e$ of $\mathcal{K}^{r-1}_{i-1}$ there is a unique edge $e'=p_i^{-1}(e)$ of $\mathcal{K}^r_n$ with $p_i(e')=e$. 
If $c$ is an $r$-monotone coloring of $\mathcal{K}^r_n$, then we use $p_i(c)$ to denote the 2-coloring of $\mathcal{K}^{r-1}_{i-1}$ obtained by coloring an edge $e$ of $\mathcal{K}^{r-1}_{i-1}$ with the color $c(p_i^{-1}(e))$.

We show that every $p_i(c)$ is an $(r-1)$-monotone coloring of $\mathcal{K}^{r-1}_{i-1}$.
Suppose for contradiction that there is an $i \in \{r,\dots,n\}$ such that $p_i(c)$ is not an $(r-1)$-monotone coloring of~$\mathcal{K}^{r-1}_{i-1}$.
Then there is an $r$-tuple $R$ of vertices from $[i-1]$ such that the sequence $S_R=(p_i(c)(R^{(r)}),\dots,p_i(c)(R^{(1)}))$ has at least two changes of a sign.
It follows from the definition of $p_i$ that, for the $(r+1)$-tuple $T=R\cup\{i\}$, we have $c(T^{(j)})=p_i(c)(R^{(j)})$ for every $j \in [r]$.
Thus the sequence $S_T=(c(T^{(r+1)}),\dots,c(T^{(1)}))$ equals to the sequence that is obtained from~$S_R$ by adding the first coordinate $c(T^{(r+1)})=c(R)$.
Then, however, there are at least two changes of a sign in $S_T$, which contradicts the assumption that $c$ is $r$-monotone.

Every $r$-monotone coloring $c$ of $\mathcal{K}^r_n$ thus yields a sequence $S_c=(p_r(c),\allowbreak\dots,p_n(c))$ of $(r-1)$-monotone colorings.
Moreover, the mapping $c \mapsto S_c$ is injective.
For every $i \in \{r,\dots,n\}$, the number of choices for $p_i(c)$ is at most $2^{2^{r-3}(i-1)^{r-2}/(r-2)!}$ by the induction hypothesis.
Altogether, the number of sequences~$S_c$, and thus also the number of $r$-monotone colorings of~$\mathcal{K}^r_n$, is at most
\[\prod_{i=r}^n 2^{2^{r-3}(i-1)^{r-2}/(r-2)!} \leq 2^{(2^{r-3}/(r-2)!)\sum_{i=1}^n  i^{r-2}} \leq 2^{2^{r-2}n^{r-1}/(r-1)!}.\]
To derive the last inequality, we used the estimate $\sum_{i=1}^n i^{r-2} \leq \frac{n^{r-1}}{r-1} + n^{r-2} \leq 2n^{r-1}/(r-1)$ for the power sum~\cite{beard96}.
This finishes the proof of the upper bound in Theorem~\ref{thm-signotopesCount}.

\subsection*{Acknowledgment}
First of all, I would like to thank the anonymous referees for carefully going through the manuscript and for very helpful suggestions that really improved the overall presentation of the paper.
I would also like to thank Attila P\'{o}r and Pavel Valtr for interesting discussions during the early stages of the research.

\bibliographystyle{plain}
\bibliography{bibliography}

\end{document}